\journal{Journal of Combinatorial Theory, Series B}
\theoremstyle{plain}
\newtheorem{theorem}{Theorem}[section]
\newtheorem{lemma}[theorem]{Lemma}
\newtheorem{corollary}[theorem]{Corollary}
\theoremstyle{definition}
\newtheorem{definition}[theorem]{Definition}
\newtheorem{example}[theorem]{Example}
\newtheorem{problem}[theorem]{Problem}
\newtheorem{proposition}[theorem]{Proposition}
\theoremstyle{remark}
\newtheorem{remark}[theorem]{Remark}
\title{On the coefficients of interior and exterior polynomials of polymatroids}
\author{Xiaxia Guan$^{a}$,~~Xian'an Jin$^{b}$\footnote{Corresponding author.},~~Tianlong Ma$^{c}$,~~Weihua Yang$^{a}$ \\
\small $^a$Department of Mathematics, Taiyuan University of Technology, P. R. China\\
\small $^b$School of Mathematical Sciences, Xiamen University, P. R. China\\
\small $^c$School of Science, Jimei University, P. R. China\\
\small \emph{Email addresses}: guanxiaxia@tyut.edu.cn; xajin@xmu.edu.cn; tianlongma@aliyun.com; yangweihua@tyut.edu.cn}
\begin{document}
\begin{abstract}
The Tutte polynomial is an important invariant of  graphs and matroids. Chen and Guo \emph{[Adv. in Appl. Math. 166 (2025) 102868.]} proved that
for a $(k+1)$-edge connected graph $G$ and  for any $i$ with $0\leq i <\frac{3(k+1)}{2}$, $$[y^{g-i}]T_{G}(1,y)=\binom{|V(G)|+i-2}{i}-\sum_{j=0}^{i}\binom{|V(G)|+i-2-j}{i-j}|\mathcal{SC}_{j}(G)|,$$
where $g=|E(G)|-|V(G)|+1$, $\mathcal{SC}_{j}(G)$ is the set of all minimal edge cuts with $j$ edges, $T_{G}(x,y)$ is the Tutte polynomial of the graph $G$, and $[y^{g-i}]T_{G}(1,y)$ denotes the coefficient of $y^{g-i}$ in the polynomial $T_{G}(1,y)$. Recently, Ma, Guan and Jin \emph{[arXiv.2503.06095, 2025.]} generalized this result from graphs to matroids and obtained the dual result on coefficients of $T_M(x,1)$ of matroids $M$ at the same time. In 2013, as a generalization of $T_{G}(x,1)$ and $T_{G}(1,y)$ of graphs $G$ to hypergraphs,  K\'{a}lm\'{a}n \emph{[Adv. Math. 244 (2013) 823-873.]} introduced   interior  and  exterior polynomials for connected hypergraphs. Chen and Guo posed a problem that can one generalize these results of graphs to interior  and  exterior polynomials of hypergraphs? In this paper, we solve it in the affirmative by obtaining  results for more general polymatroids, which include the case of hypergraphs and also generalize the results of matroids due to Ma, Guan and Jin. As an application, the sequence consisting of these coefficients on polymatroids is proven to be unimodal, while the unimodality of the whole coefficients of matroids was obtained in 2018 by Adiprasito, Huh and Katz using Hodge theory.
\end{abstract}

\begin{keyword}
Tutte polynomial\sep Exterior polynomial\sep Interior polynomial\sep Deletion-contraction formula\sep Coefficient
\MSC 05C31\sep 05B35\sep 05C65
\end{keyword}

\maketitle
\section{Introduction}
\noindent

The Tutte polynomial \cite{Tutte} is a crucial and well-studied topic in graph and matroid theory (Crapo \cite{Crapo} extended the Tutte polynomial from graphs to matroids.), having wide applications in combinatorics, statistical physics, knot theory, quantum group theory and so on. Let $T_{G}(x,y)$ be the Tutte polynomial of a graph $G$ and let $[x^{i}]f(x)$ denote the coefficient of $x^{i}$ in a polynomial $f(x)$. Recently, Chen and Guo  \cite{Chen} obtained a relation between  coefficients of $T_{G}(1,y)$ and its minimal edge cuts of the graph $G$ via the Abelian sandpile model.

\begin{theorem} \label{connected-}\cite{Chen}
Let $G$ be a $(k+1)$-edge connected graph and let $g=|E(G)|-|V(G)|+1$. Then for any $i$ with $0\leq i <\frac{3(k+1)}{2}$, $$[y^{g-i}]T_{G}(1,y)=\binom{|V(G)|+i-2}{i}-\sum_{j=0}^{i}\binom{|V(G)|+i-2-j}{i-j}|\mathcal{SC}_{j}(G)|,$$
where $\mathcal{SC}_{j}(G)$ is the set of all minimal edge cuts with $j$ edges.
\end{theorem}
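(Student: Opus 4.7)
The plan is to combine a Whitney rank expansion of $T_G(1,y)$ with a key combinatorial threshold for unions of bonds in highly edge-connected graphs, and then to finish with a short generating-function identity. Write $n=|V(G)|$, $m=|E(G)|$, and $g=m-n+1$ throughout.

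Setting $x=1$ in $T_G(x,y)=\sum_{S\subseteq E}(x-1)^{r(E)-r(S)}(y-1)^{|S|-r(S)}$ kills every term with $r(S)<n-1$, leaving a sum over connected spanning subgraphs $S$. Passing to complements $T=E\setminus S$, which are precisely the subsets of $E$ containing no bond, rewrites this as $T_G(1,y)=\sum_{j=0}^{g}f_j(y-1)^{g-j}$, where $f_j$ counts $j$-subsets of $E(G)$ containing no bond. A binomial expansion then gives
$$[y^{g-i}]T_G(1,y)=\sum_{j=0}^{i}(-1)^{i-j}\binom{g-j}{i-j}f_j,$$
so the task is reduced to computing $f_j$ for $j\leq i$.

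The crux is the following bond-union lemma: for any two distinct bonds $B_1,B_2$ of $G$, $|B_1\cup B_2|\geq \tfrac{3(k+1)}{2}$. Fix a vertex $s$ and, for each bond $B_r$, let $U_r$ denote the side of the bipartition not containing $s$, so that $B_r=[U_r,V\setminus U_r]$; a direct parity computation shows $B_1\triangle B_2=[U_1\triangle U_2,V\setminus(U_1\triangle U_2)]$, and $B_1\neq B_2$ forces $U_1\triangle U_2\notin\{\emptyset,V\}$. Hence this edge cut has size at least $k+1$ by the edge-connectivity hypothesis. Combining this with $|B_r|\geq k+1$ and the identity $|B_1\cup B_2|=\tfrac{|B_1|+|B_2|+|B_1\triangle B_2|}{2}$ yields the claimed bound. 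Consequently, for $j<\tfrac{3(k+1)}{2}$ no $j$-subset of $E$ contains two distinct bonds, and enumerating ``bad'' $j$-subsets by the unique bond they contain gives
$$f_j=\binom{m}{j}-\sum_{b\geq 0}\binom{m-b}{j-b}|\mathcal{SC}_b(G)|.$$

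Substituting this expression into the alternating formula, swapping the order of summation, and invoking the identity
$$\sum_{j=0}^{i}(-1)^{i-j}\binom{g-j}{i-j}\binom{m-b}{j-b}=\binom{n+i-b-2}{i-b}$$
yields exactly the desired expression. This identity is a one-line coefficient extraction: its left-hand side equals $[x^i]\sum_{j}\binom{m-b}{j-b}x^j(1-x)^{g-j}$, which simplifies to $[x^i]\,x^b(1-x)^{g-m}=[x^i]\,x^b(1-x)^{-(n-1)}$ using $m-g=n-1$, and the binomial series for $(1-x)^{-(n-1)}$ delivers the right-hand side. The main technical obstacle is the bond-union lemma; once it is in hand, the hypothesis $i<\tfrac{3(k+1)}{2}$ becomes transparent, as this is precisely the threshold beyond which the single-bond enumeration of ``bad'' $j$-subsets breaks down, and the remaining manipulations are routine finite-sum calculations.
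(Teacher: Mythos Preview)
Your proof is correct and takes a genuinely different route from the paper. The paper does not prove Theorem~\ref{connected-} directly; it cites it from \cite{Chen} (where it is obtained via the Abelian sandpile model) and recovers it as a specialisation of the polymatroid result Theorem~\ref{connected-exterior2}, whose proof proceeds by induction on the size of the ground set using the deletion--contraction formula of Lemma~\ref{deletion-contraction-IX} together with a careful analysis of how the hyperplane families $\mathcal{H}_j$ behave under the slices $\widehat{P}^t_k$. By contrast, you work entirely at the level of graphs: you read off from the corank--nullity expansion that $[y^{g-i}]T_G(1,y)$ is an alternating transform of the numbers $f_j$ of bond-free $j$-subsets, prove the key threshold $|B_1\cup B_2|\ge\tfrac{3(k+1)}{2}$ for distinct bonds via the cut identity $B_1\triangle B_2=[U_1\triangle U_2,\overline{U_1\triangle U_2}]$, and then close with a Vandermonde-type identity. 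Your bond-union lemma is precisely the graph-theoretic content of the inequality $\tfrac{3(k+1)}{2}\le f_2(M(G))$ that \cite{Ma} invokes to deduce Theorem~\ref{connected-} from Theorem~\ref{connected}. The trade-off is clear: the paper's inductive argument is what lifts the statement to arbitrary matroids and polymatroids, while your argument is more elementary and self-contained for graphs but relies on the vertex-bipartition description of bonds, so it does not extend beyond the graphic case.
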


In fact, $\mathcal{SC}_{j}(G)=\emptyset$ for any $0\leq j \leq k$ in case that $G$ is a $(k+1)$-edge connected graph.
Since for a plane graph $G$, $T_{G}(x,y)=T_{G^{*}}(y,x)$, and  minimal edge cuts of $G$  and cycles of $G^{*}$ are one-to-one correspondence, where $G^{*}$ is the dual graph of $G$, they \cite{Chen} first posed the following dual problem.

\begin{problem}\label{circuits}\cite{Chen}
Can one express the coefficients of $T_{G}(x,1)$ using cycles?
\end{problem}

It is well-known that any graph can induce a graphical matroid, i.e. cycle matroid. In this sense matroids generalize graphs. They \cite{Chen} also posed a problem as follows.
\begin{problem}\label{matroids}\cite{Chen}
Can one generalize the results of Theorem \ref{connected-} and Problem \ref{circuits} to matroids?
\end{problem}

Let $T_{M}(x,y)$ be the Tutte polynomial of a matroid $M$. Problems \ref{circuits} and \ref{matroids} have been solved in \cite{Ma} by establishing the relation between $T_{M}(1,y)$ and hyperplanes of matroids $M$ and duality relation of the Tutte polynomial. We state the result.

\begin{theorem} \label{connected}\cite{Ma}
Let $M$ be a matroid of rank $d$ over $E$. Then
 \begin{itemize}
  \item [(i)] for any $i$ with $0\leq i <f_{2}(M)$, $$[y^{|E|-d-i}]T_{M}(1,y)=\binom{d+i-1}{i}-\sum_{j=0}^{i}\binom{d+i-1-j}{i-j}|\mathcal{H}_{j}(M)|;$$
  \item [(ii)] for any $i$ with $0\leq i <f'_{2}(M)$, $$[x^{d-i}]T_{M}(x,1)=\binom{|E|-d+i-1}{i}-\sum_{j=0}^{i}\binom{|E|-d+i-1-j}{i-j}|\mathcal{C}_{j}(M)|,$$
\end{itemize}
where $\mathcal{H}_{j}(M)$ is the set of all hyperplanes with $|E|-j$ elements,  $\mathcal{C}_{j}(M)$ is the set of all circuits with $j$ elements, $f_{k}(M)=\min\{|F|:~\text{the rank of}~E\setminus F~\text{is}~d-k\}$, and $f'_{k}(M)=\min\{|F|:~\text{the rank of}~ F~\text{is}~|F|-k\}$.
\end{theorem}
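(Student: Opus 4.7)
The plan is to establish part (i) by direct coefficient extraction in the corank--nullity expansion of $T_M(1,y)$ combined with a hyperplane-based count of spanning subsets, and then to deduce part (ii) via matroid duality. Starting from $T_M(x,y)=\sum_{A\subseteq E}(x-1)^{d-r(A)}(y-1)^{|A|-r(A)}$ and setting $x=1$ leaves only spanning subsets, giving $T_M(1,y)=\sum_{A\subseteq E,\,r(A)=d}(y-1)^{|A|-d}$. Expanding by the binomial theorem, extracting the coefficient of $y^{|E|-d-i}$, and reindexing by the complement $F=E\setminus A$ with $m=|F|$ yields
\[
[y^{|E|-d-i}]T_M(1,y)=\sum_{m=0}^{i}(-1)^{i-m}\binom{|E|-d-m}{i-m}N_m,
\]
where $N_m=|\{F\subseteq E:|F|=m,\;r(E\setminus F)=d\}|$.

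The hypothesis $i<f_2(M)$ enters at the counting step. For $|F|\le i$ the rank $r(E\setminus F)$ cannot drop below $d-1$, so $N_m=\binom{|E|}{m}-N'_m$, where $N'_m$ counts $m$-subsets $F$ with $r(E\setminus F)=d-1$. For such an $F$ the closure $H=\overline{E\setminus F}$ is a hyperplane and $F\supseteq E\setminus H$; conversely, any $m$-subset $F\supseteq E\setminus H$ with $m<f_2(M)$ automatically satisfies $r(E\setminus F)=d-1$. The key uniqueness observation is that this hyperplane is unique in our range: if $F\supseteq(E\setminus H_1)\cup(E\setminus H_2)$ with $H_1\ne H_2$, then $E\setminus F\subseteq H_1\cap H_2$ has rank at most $d-2$, forcing $|F|\ge f_2(M)$ and contradicting $m<f_2(M)$. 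Since counting $m$-subsets containing a fixed $E\setminus H$ of cardinality $j$ amounts to choosing the remaining $m-j$ elements from $H$, we obtain
\[
N'_m=\sum_{j=0}^{m}|\mathcal{H}_j(M)|\binom{|E|-j}{m-j}.
\]

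Substituting and swapping the orders of summation, each inner sum over $m$ should collapse by the upper-negation identity $(-1)^{i-m}\binom{|E|-d-m}{i-m}=\binom{d+i-|E|-1}{i-m}$ followed by Vandermonde's convolution, yielding $\binom{d+i-1}{i}$ from the $\binom{|E|}{m}$ contribution and $\binom{d+i-j-1}{i-j}$ from the $\binom{|E|-j}{m-j}$ contribution; collecting terms reproduces exactly the formula in (i). The main obstacle I anticipate is the uniqueness of the hyperplane attached to each small $F$, together with the careful synchronisation of the ranges so that $m\le i<f_2(M)$ is preserved throughout; the binomial identities themselves should be routine once the counting has been laid out correctly.

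For part (ii) I would invoke matroid duality. Since $T_M(x,y)=T_{M^*}(y,x)$, one has $[x^{d-i}]T_M(x,1)=[y^{d-i}]T_{M^*}(1,y)$, and because $M^*$ has rank $|E|-d$, the exponent $d-i$ equals $|E|-r(M^*)-i$, matching the form of part (i) applied to $M^*$. Hyperplanes of $M^*$ of cardinality $|E|-j$ are complements of cocircuits of $M^*$, which coincide with the circuits of $M$, hence $|\mathcal{H}_j(M^*)|=|\mathcal{C}_j(M)|$. A short rank-duality computation using $r^*(A)=|A|-d+r(E\setminus A)$ gives $f_2(M^*)=f'_2(M)$, so part (i) applied to $M^*$ delivers part (ii) for $M$.
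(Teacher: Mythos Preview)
Your argument is correct. The corank--nullity expansion, the hyperplane count (including the uniqueness claim, which follows because the intersection of two distinct hyperplanes is a proper flat of rank at most $d-2$), and the Vandermonde reduction all go through as you describe, and the duality step for part~(ii) is standard.

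However, your route is genuinely different from the paper's. The paper does not prove Theorem~\ref{connected} directly; it is quoted from~\cite{Ma} and then recovered as the matroid specialization of the polymatroid result Theorem~\ref{connected-exterior2} (see the Remark following that theorem). The proof of Theorem~\ref{connected-exterior2} proceeds by induction on the size $n$ of the ground set, using the deletion--contraction-type recursion of Lemma~\ref{deletion-contraction-IX} for the exterior polynomial together with a careful analysis (Claims~1 and~2 in that proof) of how the hyperplane sets $\mathcal{H}_j$ behave under passing to $P/t$, $P\setminus t$, and the intermediate slices $\widehat P^t_k$. Part~(ii) is then deduced from part~(i) via the polymatroid duality $I_P=X_{P^*}$, mirroring your use of $T_M(x,y)=T_{M^*}(y,x)$.

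The trade-off is clear: your approach is shorter and more elementary for matroids because it exploits the subset-sum (corank--nullity) form of the Tutte polynomial, which has no direct analogue for general polymatroids; the paper's inductive deletion--contraction argument is longer but is what allows the statement to be pushed to the polymatroid setting, where your starting identity $T_M(1,y)=\sum_{r(A)=d}(y-1)^{|A|-d}$ is unavailable.
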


In fact, $\mathcal{H}_{j}(M)=\emptyset$ if $j<f_{1}(M)$, and $\mathcal{C}_{j}(M)=\emptyset$ if $j<f'_{1}(M)$. They \cite{Ma} also showed that $\frac{3(k+1)}{2}\leq f_{2}(M(G))$ for any $(k+1)$-edge connected graph $G$, where $M(G)$ is the cycle matroid of $G$. Hence, Theorem \ref{connected}(i)  generalizes the result of Theorem \ref{connected-} from graphs to matroids. As a generalization of the one-variable evaluations $T_G(x,1)$ and $T_G(1,y)$ of the Tutte polynomial $T_G(x,y)$ of graphs $G$ to hypergraphs, K\'{a}lm\'{a}n \cite{Kalman1} introduced the interior polynomial $I_{\mathcal{H}}(x)$ and the exterior polynomial $X_{\mathcal{H}}(y)$ for connected hypergraphs $\mathcal{H}$ via internal and external activities of hypertrees. (Hypertrees were first described as `left or right degree vectors' in \cite{Postnikov}.)  Later, K\'{a}lm\'{a}n, Murakami, and Postnikov \cite{Kalman2,Kalman3} established  that certain leading terms of the HOMFLY polynomial \cite{Jones}, which is a generalization of the celebrated Jones polynomial \cite{Jones} in knot theory, of any special alternating link coincide with the common interior polynomial of the pair of hypergraphs derived from the Seifert graph (which is a bipartite graph) of the link. Chen and Guo \cite{Chen} also posed the following problem for interior and exterior polynomials of hypergraphs.

\begin{problem}\label{exterior}\cite{Chen}
Can one generalize the results of Theorem \ref{connected-} and Problem \ref{circuits} to interior and exterior polynomials of hypergraphs?
\end{problem}

Polymatroids are a generalization of matroids and an abstraction of hypergraphs. In 2022, Bernardi, K\'{a}lm\'{a}n and Postnikov \cite{Bernardi} extended the interior polynomial $I_{P}(x)$ and the exterior polynomial $X_{P}(y)$ from hypergraphs to polymatroids $P$, which also generalize the one-variable evaluations $T_M(x,1)$ and $T_M(1,y)$ of the Tutte polynomial $T_M(x,y)$ of matroids $M$, respectively. In \cite{Guan4}, coefficients of some lower-order terms of the exterior polynomial were obtained for polymatroids.

\begin{corollary} \label{connected-exterior} \cite{Guan4}
Let $P\subseteq \mathbb{Z}^{n}_{\geq 0}$ be a polymatroid and $f$ be its rank function. Then for all $k\leq n-1$, the coefficient $$[y^{i}]X_{P}(y)=\binom{f([n])+i-1}{i}$$ for all $i\leq k$ if and only if $f([n]\setminus J)=f([n])$ for all $J\subseteq [n]$ with $|J|=k$.
\end{corollary}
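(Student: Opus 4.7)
The plan is to prove both implications by combining the hypertree description of $X_P(y)$ from \cite{Bernardi} with an induction on $k$ driven by a deletion-contraction recurrence. Recall that $X_P(y) = \sum_H y^{\bar\xi(H)}$, where the sum ranges over hypertrees $H$ of $P$ and $\bar\xi(H)$ denotes external activity. Setting $d := f([n])$, the binomial coefficient $\binom{d+i-1}{i}$ counts multisets of size $i$ from a $d$-element set; I would first establish the universal upper bound $[y^i]X_P(y) \le \binom{d+i-1}{i}$ by constructing an injection from hypertrees of external activity $i$ into such multisets, labeling the ``basis levels'' of $P$.

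For the sufficient direction (the rank condition implies saturation of the coefficients), I would induct on $k$. The base $k=0$ reduces to $[y^0]X_P(y) = 1$, i.e., the existence and uniqueness of a hypertree of external activity zero. For the inductive step, assuming $f([n]\setminus J) = f([n])$ for all $|J|=k$, I would fix any $j \in [n]$: the hypothesis specialized to $J=\{j\}$ gives $f([n]\setminus\{j\}) = f([n])$, so deletion by $j$ preserves rank, and the rank-preservation condition at level $k-1$ descends to $P\setminus j$. Applying the deletion-contraction recurrence for $X_P(y)$ together with the inductive hypothesis then shows that every multiset of size $\le k$ is realized as the external activity of some hypertree, saturating the bound.

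For the necessary direction, I would argue by contrapositive: if $f([n]\setminus J_0) < f([n])$ for some $|J_0|=k$, then the restriction of $P$ to $[n]\setminus J_0$ fails to reach full rank, which obstructs at least one multiset of size $k$ from being realized as an external activity, forcing $[y^k]X_P(y) < \binom{d+k-1}{k}$. This parallels the subtractive $|\mathcal{H}_j(M)|$-terms in Theorem \ref{connected}(i), with low-rank subset complements playing the role of matroid hyperplanes in the polymatroid setting.

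The main obstacle I expect is establishing the injection underlying the upper bound and pinning down precisely which multisets fail to appear as external activities when a low-rank subset complement exists. This requires a careful analysis of the hypertree polytope and of how external activity transforms under deletion and contraction; the key technical lemma is that deleting an element from a rank-$d$ polymatroid whose residue still has rank $d$ preserves low-activity hypertrees up to a predictable shift, so that the monomial-hypertree correspondence is bijective up to the threshold dictated by $k$.
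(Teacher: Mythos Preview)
Your proposal is an outline rather than a proof, and it has real gaps. First, the paper does not prove Corollary~\ref{connected-exterior} directly: it is quoted from \cite{Guan4}, and the paper recovers it as the special case of Theorem~\ref{connected-exterior2} in which the condition $f([n]\setminus J)=f([n])$ for all $|J|=k$ forces $r_2(P)>k$ and $\mathcal{H}_j(P)=\emptyset$ for every $j\le k$, so all subtracted terms vanish. Thus the paper's route is: prove the refined coefficient formula with the hyperplane correction terms by induction on $n$ via the slice recursion of Lemma~\ref{deletion-contraction-IX}, and then read off the corollary.

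Your plan diverges from this in two places where it is not yet a proof. The proposed upper bound $[y^i]X_P(y)\le\binom{d+i-1}{i}$ via ``an injection from hypertrees of external activity $i$ into multisets of size $i$, labeling the basis levels of $P$'' is not defined: there is no canonical $d$-element set attached to a general polymatroid into which such an injection is obvious, and ``basis levels'' is not a standard notion. (Also, for general $P$ the elements are bases, not hypertrees; that terminology is specific to the hypergraphical case.) Second, the ``deletion--contraction recurrence'' you invoke is, for polymatroids, the formula $X_P(y)=X_{P/t}(y)+y\sum_{j\in T_t\setminus\{\beta_t\}}X_{\widehat{P}^t_j}(y)$ of Lemma~\ref{deletion-contraction-IX}, which involves \emph{all} slices $\widehat{P}^t_j$, not just $P\setminus t$ and $P/t$. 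Your inductive step only mentions $P\setminus j$ and does not explain why the rank hypothesis descends to every slice, nor how the binomial identities recombine across slices of different ranks $f([n])-j$; this is exactly the computation carried out in the proof of Theorem~\ref{connected-exterior2}. The contrapositive for the converse direction is likewise only a gesture: you have not identified which coefficient actually drops, or why, when some $f([n]\setminus J_0)<f([n])$.

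In short, the workable part of your plan (induction via a deletion--contraction-type recursion) is precisely what the paper does, but the paper does it for the sharper statement of Theorem~\ref{connected-exterior2} and gets the corollary for free, whereas your additional ingredients (the injection, the multiset bijection) are undefined and, as you yourself flag in your final paragraph, constitute the main obstacle rather than a solved step.
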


 In this paper, we answer Problem \ref{exterior} in the affirmative by proving a general result for polymatroids in Theorem \ref{connected-exterior2}, which also extends Theorem \ref{connected} from matroids to polymatroids, and includes Corollary \ref{connected-exterior} as a special case.

In 2018, Adiprasito, Huh and Katz \cite{Adiprasito} indirectly proved that the sequences of all coefficients of $T_{M}(x,1)$ and $T_{M}(1,y)$ are both unimodal by applying Hodge theory. In this paper,  the unimodality of partial coefficients obtained in Theorem \ref{connected-exterior2}, of  exterior and interior polynomials for polymatroids, is proven. This not only implies the unimodality of partial coefficients obtained in Theorems \ref{connected-} and \ref{connected}, but also provides a simpler and direct proof.

The paper is organized as follows. In Section 2, we provide some necessary definitions. Section 3 is devoted to solve Problem \ref{exterior}. The unimodality of these coefficients is proven in Section 4.
\section{Preliminaries}
\noindent

In this section, we will give some definitions.
Throughout the paper, let  $[n]=\{1,2,\ldots,n\}$, $2^{[n]}=\{I|I\subseteq[n]\}$, and let $\textbf{e}_{1},\textbf{e}_{2},\ldots,\textbf{e}_{n}$ denote the canonical basis of $\mathbb{R}^{n}$.
We first recall the definition of polymatroids.
\begin{definition} \label{def polymatroid}
A \emph{polymatroid} $P=P_{f}\subseteq \mathbb{Z}^{n}_{\geq 0}$ (in other words, over the ground set $[n]$) with rank function $f$ is  given as $$P=\left\{(a_{1},\ldots,a_{n})\in \mathbb{Z}^{n}_{\geq 0}\bigg|\sum_{i\in I}a_{i}\leq f(I) \ \text{for any}\ I\subseteq [n] \ \text{and} \ \sum_{i\in [n]}a_{i}=f([n])\right\},$$ where   $f:2^{[n]}\rightarrow \mathbb{Z}_{\geq 0}$ satisfies
\begin{enumerate}
\item[(i)] $f(\emptyset)=0$ (normalization);
\item[(ii)] $f(I)\leq f(J)$ for any $I\subseteq J\subseteq [n]$ (monotonicity);
\item[(iii)] $f(I)+f(J)\geq f(I\cup J)+f(I\cap J)$ for any $I,J\subseteq [n]$ (submodularity).
\end{enumerate}
We say that a vector $\textbf{a}$ is a \emph{basis} of the polymatroid $P$ if $\textbf{a}\in P$.
\end{definition}
\vskip0.1cm
It is easy to see that (the set of its vertices of) the base polytope of any matroid
is a polymatroid. More precisely, if $M$ is a matroid over $[n]$, and  $P=P(M)$ is the corresponding polymatroid of $M$, then for any $B\subseteq [n]$, let
$$I_{B}=(a_{1},\ldots,a_{n})\in \{0,1\}^{n}\ \text{where}\ a_{i}=1 \ \text{if and only if}\  i\in B.$$ Then
$$P=P(M)=\{I_{B} ~|~\text{$B$ is a base of} \ M\}\subseteq \{0,1\}^{n}\subseteq \mathbb{Z}^{n}_{\geq 0}.$$

\begin{definition}
A \emph{hypergraph} is a  pair $\mathcal{H}=(V,E)$, where $V$ is a finite set and $E$ is a finite multiset of non-empty subsets of $V$. Elements of $V$ are called \emph{vertices} and  elements of $E$ are called \emph{hyperedges}, respectively, of the hypergraph.
\vskip0.1cm
For a hypergraph $\mathcal{H}=(V,E)$, its \emph{associated bipartite graph} $\mathrm{Bip} \mathcal{H}$ is defined as follows.
The sets $V$ and $E$ are the partite sets of $\mathrm{Bip} \mathcal{H}$, and an element $v$ of $V$ is adjacent to an element $e$ of $E$ in $\mathrm{Bip} \mathcal{H}$ if and only if $v\in e$.

For a subset $E'\subseteq E$, let $\mathrm{Bip} \mathcal{H}|_{E'}$ denote the bipartite graph formed by $E'$, all edges of $\mathrm{Bip} \mathcal{H}$ incident with elements of $E'$, and their end-vertices in $V$. Define $\mu(E')=0$ for $E'=\emptyset$, and $\mu(E')=|\bigcup E'|-c(E')$ for $E'\neq \emptyset$, where $\bigcup E'=V\cap (\mathrm{Bip} \mathcal{H}|_{E'})$ and $c(E')$ is the number of connected components of $\mathrm{Bip} \mathcal{H}|_{E'}$.
\end{definition}

K\'{a}lm\'{a}n \cite{Kalman1} proved that $\mu$ is submodular and monotonous.
In that sense, polymatroids generalize hypergraphs. The elements of the polymatroid induced by $\mu$ (in the sense of Definition \ref{def polymatroid}) will be referred  to as  \emph{hypertrees} because they are essentially degree distributions on $E$ of maximal spanning forests of $\mathrm{Bip} \mathcal{H}$, cf. \cite{Kalman1} and also generalizations of spanning trees of a connected graph.

\begin{definition}
A polymatroid is called a \emph{hypergraphical polymatroid}, denoted by $P_{\mathcal{H}}$, if it is the set of all hypertrees of some hypergraph $\mathcal{H}$.
\end{definition}

We next recall (internal and  external) activity of a basis of a polymatroid.
\begin{definition}
Let $P$ be a polymatroid  over $[n]$.
For a basis $\textbf{a}\in P$, an index $i\in [n]$ is \emph{internally active} if $\textbf{a}-\textbf{e}_{i}+\textbf{e}_{j}\notin P$ for any $j<i$. Let $\mathrm{Int}(\textbf{a})=\mathrm{Int}_{P}(\textbf{a})\subseteq [n]$ denote the set of all
internally active indices with respect to $\textbf{a}$.

We call $i\in [n]$ \emph{externally active} if $\textbf{a}+\textbf{e}_{i}-\textbf{e}_{j}\notin P$ for any $j<i$. Let $\mathrm{Ext}(\textbf{a})=\mathrm{Ext}_{P}(\textbf{a})$ denote the set of all externally active indices with respect to $\textbf{a}$.

$|\mathrm{Int}(\textbf{a})|$ and $|\mathrm{Ext}(\textbf{a})|$ are called  \emph{internal and  external activity} of $\textbf{a}$, respectively.
\end{definition}
K\'{a}lm\'{a}n \cite{Kalman1} defined  interior and exterior polynomials of hypergraphical polymatroids. Recently, Bernardi, K\'{a}lm\'{a}n and Postnikov \cite{Bernardi} extended to polymatroids.
\begin{definition}\label{def}
For a polymatroid $P\subseteq \mathbb{Z}^{n}_{\geq 0}$, denote \emph{the interior polynomial} $$I_{P}(x):=\sum_{\textbf{a}\in P}x^{n-|\mathrm{Int}(\textbf{a})|}$$ and
 \emph{the exterior polynomial} $$X_{P}(y):=\sum_{\textbf{a}\in P}y^{n-|\mathrm{Ext}(\textbf{a})|}.$$
\end{definition}

\begin{remark} We have two notes on Definition \ref{def}.
\begin{itemize}
\item[(a)] In fact, Bernardi \emph{et al.}~\cite{Bernardi} defined  interior and exterior polynomials of polymatroids $P\subseteq\mathbb{Z}^{n}$. For any $\textbf{c}=(c_{1},\ldots,c_{n})\in \mathbb{Z}^{n}$, let the polymatroid
$P+\textbf{c} : =\{(a_{1}+c_{1},\ldots,a_{n}+c_{n})|(a_{1},\ldots,a_{n})\in P\}$.
Bernardi  \emph{et al.}~\cite{Bernardi} proved
\begin{eqnarray}\label{translation invariance}
I_{P}(x)=I_{P+\textbf{c}}(x)~\text{and}~X_{P}(y)=X_{P+\textbf{c}}(y).
\end{eqnarray}
Hence, it is enough to consider interior and exterior polynomials for polymatroids $P\subseteq \mathbb{Z}^{n}_{\geq 0}$.
\item[(b)] For a polymatroid $P\subseteq \mathbb{Z}^{n}_{\geq 0}$ over $[n]$ with rank function $f$, let $f^{\ast}$ be a function with
\begin{eqnarray} \label{dual-polymatroid}
f^{\ast}(I)=f([n]\setminus I)-f([n])+\sum_{i'\in I}f(\{i'\})
\end{eqnarray}
for any $I\subseteq [n]$. It is easy to see that $f^{\ast}$ satisfies (i)-(iii) of the rank funtion in Definition \ref{def polymatroid}. Let $P^{\ast}$ denote the polymatroid  given by $f^{\ast}$. Then $$P^{\ast}=-P+\textbf{c},$$ where $-P:=\{(-a_{1},\ldots,-a_{n})|(a_{1},\ldots,a_{n})\in P\}$ and $c_{i}=f(\{i\})$ for any $i\in [n]$. Bernardi  \emph{et al.}  \cite{Bernardi} also proved $I_{P}=X_{-P}$. Hence, by Equation (\ref{translation invariance}),
\begin{eqnarray}\label{duality-relation}
I_{P}=X_{P^{\ast}}~\text{and}~I_{P^{\ast}}=X_{P}.
\end{eqnarray}
\end{itemize}
\end{remark}
\vskip0.1cm
The definition of the interior polynomial $I_{P}$ and the exterior polynomial $X_{P}$ for a polymatroid $P$ relies on the order $1<2<\ldots<n$, but Bernardi  \emph{et al.}~\cite{Bernardi} showed that both $I_{P}$ and $X_{P}$ depend only on $P$.

Let $M$ be a matroid of rank $d$ over $[n]$, and let $P=P(M)$ be its corresponding polymatroid. Bernardi  \emph{et al.}~\cite{Bernardi} also proved that
\begin{equation*}\label{duality}
I_{P(M)}(x)=x^{d}T_{M}(x^{-1},1)~\text{and}~X_{P(M)}(y)=y^{n-d}T_{M}(1,y^{-1}).
\end{equation*}

\section{Main results}
\noindent

In this section, we study coefficients of interior and exterior polynomials of polymatroids to solve  Problem \ref{exterior}. Recall that $[x^{i}]f(x)$ is the coefficient of $x^{i}$ in the  polynomial $f(x)$. We start with some known results.

\begin{lemma} \label{coefficients-12}\cite{Guan3}
Let $P\subseteq \mathbb{Z}^{n}_{\geq 0}$ be a polymatroid with rank function $f$. Then
\begin{itemize}
  \item [(i)] $[y^{0}]X_{P}(y)=1$;
  \item [(ii)] $[y]X_{P}(y)=\sum_{i\in [n]}f([n]\setminus \{i\})-(n-1)f([n])$.
\end{itemize}
\end{lemma}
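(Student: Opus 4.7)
The plan is to leverage the combinatorial interpretation $X_P(y)=\sum_{\textbf{a}\in P}y^{|\mathrm{EP}(\textbf{a})|}$, where $\mathrm{EP}(\textbf{a}):=[n]\setminus\mathrm{Ext}(\textbf{a})$ denotes the set of \emph{externally passive} indices of a basis $\textbf{a}$. Under this interpretation, $[y^0]X_P$ counts bases with no externally passive index, while $[y]X_P$ counts bases with exactly one externally passive index.

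For part (i), I would show there is a unique basis $\textbf{a}^*\in P$ with $\mathrm{EP}(\textbf{a}^*)=\emptyset$. For \emph{existence}, I would construct $\textbf{a}^*$ by a greedy left-to-right sweep: for $j=1,2,\ldots,n$ in turn, set the coordinate $a^*_j$ to the minimum value for which $(a^*_1,\ldots,a^*_j,?,\ldots,?)$ still extends to a basis of $P$. If $\textbf{a}^*+\textbf{e}_i-\textbf{e}_j\in P$ for some $j<i$, then at stage $j$ the value $a^*_j-1$ would have worked, contradicting minimality, so every index is externally active. For \emph{uniqueness}, given distinct all-active bases $\textbf{a}\ne\textbf{b}$, let $i$ be the smallest coordinate on which they differ, say $a_i>b_i$; the polymatroid exchange axiom applied at $i$ furnishes some $\ell$ with $a_\ell<b_\ell$ and $\textbf{a}-\textbf{e}_i+\textbf{e}_\ell\in P$. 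Since coordinates below $i$ agree, $\ell>i$, so $\textbf{a}+\textbf{e}_\ell-\textbf{e}_i\in P$ contradicts external activity of $\ell$ in $\textbf{a}$.

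For part (ii), I would first reduce to the ``connected'' case in which $f([n]\setminus\{i\})=f([n])$ for every $i\in[n]$. Setting $e_i:=f([n])-f([n]\setminus\{i\})\ge 0$, every basis satisfies $a_i\ge e_i$, so $P':=P-(e_1,\ldots,e_n)\subseteq\mathbb{Z}^n_{\ge 0}$ is a polymatroid whose rank function $f'(I)=f(I)-\sum_{i\in I}e_i$ satisfies $f'([n]\setminus\{i\})=f'([n])$ for every $i$. By the translation invariance in Equation~(1), $X_P=X_{P'}$, and the target formula simplifies to $[y]X_{P'}(y)=f'([n])=\sum_i f([n]\setminus\{i\})-(n-1)f([n])$. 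In this reduced setting I would argue that, starting from the all-active basis of $P'$ produced in part (i), every basis with $|\mathrm{EP}|=1$ arises via a single ``leftward weight shift'' terminating at some coordinate $i\ge 2$, and that the admissible shifts are parameterized by a set of size exactly $f'([n])$.

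The main obstacle lies in this last bijection: pinning down which compound shifts away from the all-active basis yield $|\mathrm{EP}|=1$ rather than $|\mathrm{EP}|\ge 2$ requires delicately balancing several submodularity inequalities in order to preserve external activity of every coordinate $k\ne i$ simultaneously. I expect an induction on the coordinate $i$, combined with the same exchange-axiom bookkeeping used in the uniqueness proof of part (i), to close this step and deliver the telescoping sum to $f'([n])$; undoing the translation $P\leftrightarrow P'$ then yields the claimed formula for $[y]X_P(y)$.
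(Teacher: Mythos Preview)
The paper does not prove this lemma at all: it is quoted verbatim from \cite{Guan3} (Guan, Yang, Jin, \emph{J.\ Combin.\ Theory Ser.\ A} 201 (2024) 105798) and used as a black box in the proof of Theorem~\ref{connected-exterior2}, so there is no in-paper argument to compare your proposal against.

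Assessing your proposal on its own merits: your argument for (i) is the standard greedy/exchange argument and is correct. For (ii), however, you yourself flag the gap. After the (valid) translation reduction to $P'$ with $f'([n]\setminus\{i\})=f'([n])$ for all $i$, you need to show that the bases of $P'$ with exactly one externally passive index number precisely $f'([n])$; you do not construct the promised bijection, and your closing paragraph (``I expect an induction \ldots\ to close this step'') is a statement of intent rather than an argument. The issue is concrete: a basis $\textbf{a}$ with $\mathrm{EP}(\textbf{a})=\{i\}$ need not be a single transposition away from $\textbf{a}^*$, and you have offered no mechanism for why, after a compound shift, every coordinate $k\neq i$ remains externally active while $i$ becomes passive, nor for why the admissible shifts terminating at the various $i\ge 2$ together number exactly $f'([n])$. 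Until that bijection (or some alternative count) is actually written down, part (ii) remains unproved.
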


Let $P$ be a polymatroid over $[n]$ and let $f$ be its rank function. For convenience, for any $t\in [n]$, let $\alpha_{t}=f([n])-f([n]\setminus \{t\})$, $\beta_{t}=f(\{t\})$ and $T_{t}=\{\alpha_{t}, \alpha_{t}+1,\ldots,\beta_{t}\}$. For any $j\in T_{t}$, define
$$P^{t}_{j}:=\{(a_{1},\ldots,a_{n})\in P\mid a_{t}=j\}$$ and its projection
\begin{equation*}
\widehat{P}^{t}_{j}:=\{(a_{1},\ldots,a_{t-1},a_{t+1},\ldots,a_{n})\in \mathbb{Z}^{n-1}\mid (a_{1},\ldots,a_{n})\in P^{t}_{j}\}.
\end{equation*}
 Here the range $T_{t}$ is chosen so that $P^{t}_{j}$ and $\widehat{P}^{t}_{j}$ are nonempty if and only if $j\in T_{t}$. In \cite{Guan4}, the authors showed that $P^{t}_{j}$ and $\widehat{P}^{t}_{j}$ are polymatroids on $[n]$ and on $[n]\setminus\{t\}$, respectively. In fact, they \cite{Guan4} obtained the relation between the rank function of $P$ and the rank function of $\widehat{P}^{t}_{j}$ as follows.

\begin{proposition}\label{rank function of Pj}\cite{Guan4}
Let $P\subseteq \mathbb{Z}^{n}_{\geq 0}$ be a polymatroid over $[n]$ with rank function $f$. For some $t\in [n]$ and for any $j\in T_{t}$, let $f^{t}_{j}$ be the rank function of the polymatroid $\widehat{P}^{t}_{j}$. Then for any subset $I\subseteq [n]\setminus \{t\}$, we have $$f^{t}_{j}(I)=\min\{f(I), f(I\cup \{t\})-j\}.$$
\end{proposition}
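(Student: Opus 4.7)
The plan is to introduce the candidate function $\tilde{f}\colon 2^{[n]\setminus\{t\}}\to\mathbb{Z}_{\geq 0}$ defined by
\[
\tilde{f}(I):=\min\{f(I),\,f(I\cup\{t\})-j\}
\]
and show that $\tilde{f}$ is the rank function of $\widehat{P}^{t}_{j}$. By Edmonds' theorem for polymatroids, this reduces to verifying (a) that $\tilde{f}$ is a normalized monotone submodular integer-valued function, and (b) that the polymatroid it defines coincides with $\widehat{P}^{t}_{j}$.

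For (a), the normalization $\tilde{f}(\emptyset)=\min\{0,\beta_{t}-j\}=0$ uses $j\leq\beta_{t}$, and monotonicity follows at once from the monotonicity of $f$ applied in both arguments $I$ and $I\cup\{t\}$. Submodularity is proven by a case analysis on which branch of the $\min$ is attained at $I$ and at $J$. The two ``pure'' cases reduce directly to the submodularity of $f$ applied to $(I,J)$ or to $(I\cup\{t\},J\cup\{t\})$ respectively. In the ``mixed'' case, say $\tilde{f}(I)=f(I)$ and $\tilde{f}(J)=f(J\cup\{t\})-j$, I would apply submodularity of $f$ to the pair $(I,\,J\cup\{t\})$; since $t\notin I$, we have $I\cap(J\cup\{t\})=I\cap J$ and $I\cup(J\cup\{t\})=(I\cup J)\cup\{t\}$, hence
\[
f(I)+f(J\cup\{t\})-j\;\geq\;\bigl(f((I\cup J)\cup\{t\})-j\bigr)+f(I\cap J)\;\geq\;\tilde{f}(I\cup J)+\tilde{f}(I\cap J),
\]
where the last inequality uses $\tilde{f}(I\cup J)\leq f((I\cup J)\cup\{t\})-j$ and $\tilde{f}(I\cap J)\leq f(I\cap J)$.

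For (b), the inclusion $\widehat{P}^{t}_{j}\subseteq P_{\tilde{f}}$ is immediate: any $(a_{i})_{i\neq t}\in\widehat{P}^{t}_{j}$ extends to some $\mathbf{a}\in P$ with $a_{t}=j$, so $\sum_{i\in I}a_{i}\leq f(I)$ and $\sum_{i\in I}a_{i}+j\leq f(I\cup\{t\})$ both hold, giving $\sum_{i\in I}a_{i}\leq\tilde{f}(I)$. For the reverse inclusion, given $(a_{i})_{i\neq t}$ satisfying $\sum_{i\in I}a_{i}\leq\tilde{f}(I)$ for all $I$ with total sum $f([n])-j=\tilde{f}([n]\setminus\{t\})$ (noting that $j\geq\alpha_{t}$ forces $f([n])-j\leq f([n]\setminus\{t\})$), I would set $a_{t}:=j$ and check $\sum_{i\in K}a_{i}\leq f(K)$ for every $K\subseteq[n]$, treating $t\notin K$ via $\tilde{f}\leq f$ and $t\in K$ via $\sum_{i\in K\setminus\{t\}}a_{i}\leq\tilde{f}(K\setminus\{t\})\leq f(K)-j$. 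Since the rank function of $P_{\tilde{f}}$ equals $\tilde{f}$, this identification gives $f^{t}_{j}=\tilde{f}$, as required.

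I expect the submodularity check in the mixed case to be the main obstacle: naively pairing $I$ with $J$ produces the wrong right-hand side, and one must pair $I$ with $J\cup\{t\}$ instead and use $t\notin I$ decisively to rewrite the intersection and union so that the submodularity of $f$ yields exactly $\tilde{f}(I\cup J)+\tilde{f}(I\cap J)$. The remaining steps are routine manipulations of the polymatroid defining inequalities.
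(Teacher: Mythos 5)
The paper does not prove this proposition itself; it is cited verbatim from reference \cite{Guan4}, so there is no in-paper proof to compare against. Evaluated on its own terms, your argument is correct and complete: introducing $\tilde{f}(I)=\min\{f(I),\,f(I\cup\{t\})-j\}$ and verifying that it is a normalized, monotone, integer-valued, submodular function whose base set coincides with $\widehat{P}^{t}_{j}$ is exactly the right strategy, since the rank function of an integral polymatroid is determined by its set of bases via $f(I)=\max\{\sum_{i\in I}a_i : a\in P\}$ (Edmonds' greedy theorem). Your handling of the mixed submodularity case is the genuine content here, and the pairing of $I$ with $J\cup\{t\}$ together with the observation $I\cap(J\cup\{t\})=I\cap J$, $I\cup(J\cup\{t\})=(I\cup J)\cup\{t\}$ (valid because $t\notin I$) is precisely what makes the estimate close; this is where a naive application to $(I,J)$ or to $(I\cup\{t\},J\cup\{t\})$ would indeed fail. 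The only point worth making explicit is that the normalization $\tilde{f}(\emptyset)=0$ uses $j\le\beta_t$ and the computation $\tilde{f}([n]\setminus\{t\})=f([n])-j$ uses $j\ge\alpha_t$, so the hypothesis $j\in T_t$ enters in two distinct places, both of which you correctly flag. The set-equality argument in (b) is routine and correct in both directions.
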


\begin{definition}
Let $P$ be a polymatroid on $[n]$ with rank function $f$. For a subset $A\subseteq [n]$, the \emph{deletion} $P\setminus A$ and \emph{contraction} $P/A$, which are polymatroids on $[n]\setminus A$, are given by the rank functions $f_{P\setminus A}(I)=f(I)$ and $f_{P/A}(I)=f(I\cup A)-f(A)$, for any subset $I\subseteq [n]\setminus A$, respectively. We replace $P\setminus \{t\}$ and $P/\{t\}$ with $P\setminus t$ and $P/t$, respectively, if $A=\{t\}$.
 \end{definition}
By the submodularity of $f$, for any subset $I\subseteq [n]\setminus \{t\}$,  $$f(I\cup \{t\})+f([n]\setminus \{t\})\geq f([n])+f(I),$$ $$f(I)+f(\{t\})\geq f(I\cup \{t\}).$$
Hence, Proposition \ref{rank function of Pj} implies that
 \begin{eqnarray*}\label{Del-Con}
 \widehat{P}^{t}_{\alpha_{t}}=P\setminus t\ \text{and} \ \widehat{P}^{t}_{\beta_{t}}=P/t.
\end{eqnarray*}

In \cite{Guan4}, a reduction formula of the exterior polynomial similar to famous deletion-contraction formula was obtained.
\begin{lemma} \label{deletion-contraction-IX}\cite{Guan4}
Let $P\subseteq \mathbb{Z}^{n}_{\geq 0}$ be a polymatroid over $[n]$. For any $t\in [n]$, $$X_{P}(y)=X_{P/t}(y)+y\sum_{j\in T_{t}\setminus \{\beta_{t}\}}X_{\widehat{P}^{t}_{j}}(y).$$
\end{lemma}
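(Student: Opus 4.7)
The plan is to prove the identity by partitioning the bases of $P$ by the $t$-th coordinate and tracking how external activity transforms under the projection $\textbf{a}\mapsto\hat{\textbf{a}}$. Using $P=\bigsqcup_{j\in T_t}P^t_j$ together with the projection bijection $P^t_j\leftrightarrow\widehat{P}^t_j$, one rewrites
\[
X_P(y)=\sum_{j\in T_t}\sum_{\hat{\textbf{a}}\in\widehat{P}^t_j}y^{\,n-|\mathrm{Ext}_P(\textbf{a})|},
\]
where $\textbf{a}$ denotes the lift of $\hat{\textbf{a}}$ into $P^t_j$.

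The core step is an index-by-index comparison of $\mathrm{Ext}_P(\textbf{a})$ with $\mathrm{Ext}_{\widehat{P}^t_j}(\hat{\textbf{a}})$, using Proposition~\ref{rank function of Pj} to translate membership in $P$ into membership in the slice polymatroids $\widehat{P}^t_{j\pm 1}$. For $i<t$ the two activities coincide, as the swap candidates $j'<i$ avoid $t$. For $i>t$, activity in $P$ requires the extra condition $\hat{\textbf{a}}+\textbf{e}_i\notin\widehat{P}^t_{j-1}$ coming from the new swap $j'=t$. For $i=t$, activity in $P$ is automatic when $j=\beta_t$ (since $a_t+1>f(\{t\})$), and for $j<\beta_t$ is equivalent to $\hat{\textbf{a}}-\textbf{e}_{j'}\notin\widehat{P}^t_{j+1}$ for every $j'<t$. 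Assembling these observations, the slice $j=\beta_t$ formally contributes $X_{P/t}(y)$ (since $\widehat{P}^t_{\beta_t}=P/t$ and $t$ is always active in $P$), while each slice $j<\beta_t$ formally contributes $y\,X_{\widehat{P}^t_j}(y)$ (with $t$ by default inactive).

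Two kinds of anomalies disrupt this direct correspondence: a \emph{loss} at $(\hat{\textbf{a}},j,i)$, where some $i>t$ active in $\widehat{P}^t_j$ becomes inactive in $P$ because $\hat{\textbf{a}}+\textbf{e}_i\in\widehat{P}^t_{j-1}$; and a \emph{gain} at $(\hat{\textbf{c}},j')$ with $j'<\beta_t$, where $t$ becomes active in $P$ contrary to the default. The map $(\hat{\textbf{a}},j,i)\mapsto(\hat{\textbf{a}}+\textbf{e}_i,\,j-1)$ sets up a bijection between loss and gain events: the gain condition at $\hat{\textbf{a}}+\textbf{e}_i$ follows directly from $i$ being externally active in $\widehat{P}^t_j$, while the polymatroid exchange property, together with the choice of the smallest $i>t$ with $\hat{\textbf{c}}-\textbf{e}_i\in\widehat{P}^t_{j'+1}$, recovers the unique corresponding loss from a given gain.

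The main obstacle is the bookkeeping of the resulting cancellations. A single basis $\hat{\textbf{a}}$ may have multiple losses (one per valid $i>t$), so the cancellation is a weighted identity rather than a plain one-to-one matching: a loss multiplicity $L$ at $\hat{\textbf{a}}$ contributes a mismatch factor $y^L-1$ to the left-hand side, which must balance the partial contributions $1-y^{-1}$ coming from the $L$ matched gain partners in the adjacent slice $\widehat{P}^t_{j-1}$. I would verify this either by telescoping the generating functions $y\,X_{\widehat{P}^t_j}(y)$ across consecutive slices, or by an explicit algebraic grouping that tracks the weighted cancellations; once this balance is established, both sides agree as polynomials in $y$.
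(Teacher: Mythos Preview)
This lemma is quoted from \cite{Guan4}; the present paper does not prove it, so there is no in-paper argument to compare against. I will assess your strategy on its own terms.

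Your setup is correct: the slice decomposition, the projection bijections, and the index-by-index activity comparison all hold as you describe them. The bijection between loss triples $(\hat{\textbf a},j,i)$ and gain pairs $(\hat{\textbf a}+\textbf e_i,\,j-1)$ is also well-defined and bijective. The gap is precisely where you flag it. The deviation of the actual monomial $y^{n-|\mathrm{Ext}_P(\textbf a)|}$ from its target on the right-hand side carries an exponent shift of $L(\textbf a)-G(\textbf a)$, where $L$ counts the losses at $\textbf a$ and $G\in\{0,1\}$ records whether $t$ is active; a single basis can simultaneously have $L\ge 2$ (each loss matched to a distinct gain in slice $j-1$) while itself being a gain ($G=1$, matched from a loss in slice $j+1$). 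The factors $y^L-1$ and $1-y^{-1}$ that you propose to pair do not cancel one against one; the cancellation is genuinely global across all slices, and neither ``telescoping'' nor ``explicit algebraic grouping'' is actually carried out. As written, the argument is incomplete.

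There is a much shorter route that avoids the bookkeeping entirely. Both $X_P$ and each $X_{\widehat P^t_j}$ are independent of the chosen linear order on the ground set (the Bernardi--K\'alm\'an--Postnikov result recalled just after Definition~\ref{def}), so one may compute both sides using an order in which $t$ comes last. With $t$ largest there are no indices $i>t$, hence no losses whatsoever; and for $j<\beta_t$ the index $t$ is never externally active, because $a_t<\beta_t$ together with the polymatroid exchange axiom yields some $j'\ne t$ with $\textbf a+\textbf e_t-\textbf e_{j'}\in P$, and necessarily $j'<t$. Thus every basis contributes exactly its target term and the identity is immediate. Your direct attack on arbitrary $t$ is not wrong in spirit, but it is working much harder than necessary.
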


\begin{remark}
In fact, the results in Proposition \ref{rank function of Pj}, Lemmas \ref{coefficients-12} and \ref{deletion-contraction-IX} also hold for any polymatroid $P\subseteq\mathbb{Z}^{n}$.
\end{remark}

For a non-negative integer $k$, denote
$$r_{k}(P)=\min\{n-|F|:~f(F)\leq f([n])-k ~\text{for}~F\subseteq [n]\},$$ and
$$r'_{k}(P)=\min\left\{|F|:~\sum _{i'\in F}f(\{i'\})-f(F)\geq k ~\text{for}~F\subseteq [n]\right\}.$$

Note that $\sum _{i'\in \emptyset}f(\{i'\})=f(\emptyset)$. Then the following properties hold by the monotonicity of $f$, and the definitions of $r_{k}(P)$ and $r'_{k}(P)$.
\begin{proposition}\label{rP+r'P}
Let $P\subseteq \mathbb{Z}^{n}_{\geq 0}$ be a polymatroid over $[n]$ with  rank function $f$. For a non-negative integer $k$, we have that
\begin{enumerate}
\item [(i)] $r_{k}(P)$ exists if and only if $k\leq f([n])$, and $r'_{k}(P)$ exists if and only if $k\leq \sum _{i'\in [n]}f(\{i'\})-f([n])$;
\item [(ii)]  $0=r_{0}(P)\leq r_{k}(P)\leq r_{k+1}(P)\leq n$ and $0=r'_{0}(P)\leq r'_{k}(P)\leq r'_{k+1}(P)\leq n$;
\item [(iii)] $f([n]\setminus I)\geq f([n])-k+1$ for any $I$ with $|I|<r_{k}(P)$, and $\sum _{i'\in I}f(\{i'\})-f(I)\leq k-1$ for any $I$ with $|I|<r'_{k}(P)$.
\end{enumerate}
\end{proposition}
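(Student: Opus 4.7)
The plan is to prove all three parts by directly unpacking the defining minima of $r_k(P)$ and $r'_k(P)$ and invoking only the most basic properties of $f$: nonnegativity, monotonicity, and submodularity. The one auxiliary fact I would establish first is that the quantity $g(F):=\sum_{i'\in F}f(\{i'\})-f(F)$ is monotonically non-decreasing in $F$. This is a one-line consequence of submodularity: for $i\notin F$ we have $f(F\cup\{i\})\leq f(F)+f(\{i\})$, and hence $g(F\cup\{i\})-g(F)=f(\{i\})-\bigl(f(F\cup\{i\})-f(F)\bigr)\geq 0$. Alternatively one may note that $r'_k(P)=r_k(P^{\ast})$ directly from the dual rank function in \eqref{dual-polymatroid}, which reduces the $r'_k$ side to the $r_k$ side, but I would handle both in parallel for symmetry.

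For part (i), I would test whether the sets defining $r_k(P)$ and $r'_k(P)$ are nonempty. For $r_k(P)$, taking $F=\emptyset$ witnesses existence exactly when $0\leq f([n])-k$; conversely, since $f(F)\geq 0$ always, when $k>f([n])$ no $F$ can satisfy $f(F)\leq f([n])-k$. For $r'_k(P)$, taking $F=[n]$ witnesses existence whenever $k\leq g([n])=\sum_{i'\in[n]}f(\{i'\})-f([n])$, and by the monotonicity of $g$ just noted, when $g([n])<k$ no subset $F$ can satisfy $g(F)\geq k$.

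For part (ii), I would verify the boundary values and monotonicity separately. Taking $F=[n]$ in the defining set for $r_0$ gives $f([n])\leq f([n])-0$, so $r_0(P)=0$; likewise $F=\emptyset$ gives $r'_0(P)=0$. The inequality $r_k(P)\leq r_{k+1}(P)$ holds because the feasible set of $F$'s for $r_{k+1}$ is contained in that for $r_k$, and the minimum of $n-|F|$ over a larger family is no larger; the same argument applied to $g(F)\geq k$ versus $g(F)\geq k+1$ yields $r'_k(P)\leq r'_{k+1}(P)$. The upper bounds $r_k(P),r'_k(P)\leq n$ are witnessed by $F=\emptyset$ and $F=[n]$ respectively, which are feasible in the relevant ranges.

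Part (iii) I expect to be purely a contrapositive reading of the defining minimum. If $|I|<r_k(P)$, then taking $F=[n]\setminus I$ would give $n-|F|=|I|<r_k(P)$, so $F$ cannot belong to the set whose minimum defines $r_k(P)$; this means $f([n]\setminus I)>f([n])-k$, which rearranges to $f([n]\setminus I)\geq f([n])-k+1$. The $r'_k$-case is identical after taking $F=I$ in its defining set. I do not anticipate any genuine obstacle here; the only substantive ingredient is the submodularity-based monotonicity of $g$ noted at the start, and the rest is bookkeeping around the \emph{min} operator.
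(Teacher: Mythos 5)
Your proof is correct, and the paper itself offers no written proof of this proposition --- it merely asserts that the claims ``hold by the monotonicity of $f$, and the definitions of $r_{k}(P)$ and $r'_{k}(P)$.'' Your direct unpacking of the defining minima is precisely what that assertion amounts to, and all the steps check out: the witnesses $F=\emptyset$ and $F=[n]$ for existence and for $r_0=r'_0=0$, the nested feasible sets for monotonicity in $k$, and the contrapositive reading for part (iii). One point where you are actually more precise than the paper: the monotonicity of $g(F)=\sum_{i'\in F}f(\{i'\})-f(F)$, which you need both for the existence criterion for $r'_k(P)$ and for $r'_k(P)\leq n$, is a consequence of the \emph{submodularity} of $f$, not its monotonicity --- your one-line derivation via $f(F\cup\{i\})\leq f(F)+f(\{i\})$ is the right ingredient, and the paper's attribution to ``monotonicity of $f$'' is loose. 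Your alternative remark that $r'_k(P)=r_k(P^{\ast})$ would let you reduce everything to the $r_k$ side is also valid, but since the paper establishes that duality only in the subsequent Proposition~\ref{P+P*}, treating both sides in parallel (as you did) is the cleaner choice at this point in the development.
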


\begin{remark}
It is possible that for some $k$, there does not exist an $F\subseteq [n]$ such that $f(F)=f([n])-k$ or $\sum _{i'\in F}f(\{i'\})-f(F)=k$. Moreover, it is possible that $r_{k+1}(P)= r_{k}(P)$ or $r'_{k}(P)=r'_{k+1}(P)$ for some $k$.
\end{remark}

\begin{definition}
Let $P$ be a polymatroid  over $[n]$  and let $f$ be its rank function. For any $I\subseteq [n]$, its \emph{closure} is $$cl(I)=\{e\in [n]~|~f(I\cup \{e\})=f(I)\}.$$
\end{definition}
Clearly, $I\subseteq cl(I)$. We say that a subset $I\subseteq [n]$ is a \emph{flat} if $cl(I)=I$, that is, $f(I\cup \{e\})>f(I)$ for any $e\in [n]\setminus I$ by the monotonicity of $f$.

Let $$\mathcal{C}(P)=\left\{C\subseteq [n]:\sum _{i'\in C}f(\{i'\})-f(C)=1,\sum _{i'\in C\setminus j}f(\{i'\})=f(C\setminus j)~\text{for any}~j\in  C\right\}.$$
For a positive integer $j$, denote
$$\mathcal{H}_{j}(P)=\left\{H:~H ~\text{is a flat, $f(H)=f([n])-1$ and}~n-|H|=j\right\},$$
and
$$\mathcal{C}_{j}(P)=\left\{C: C\in \mathcal{C}~\text{and}~|C|=j\right\}.$$

Recall that $P^{\ast}$ is a polymatroid given by  $f^{\ast}$ (see Equation (\ref{dual-polymatroid})). By $f^{\ast}(I)=f([n]\setminus I)-f([n])+\sum_{i'\in I}f(\{i'\})$ for any $I\subseteq [n]$, we have that $f([n]\setminus I)=f([n])-k$ if and only if  $f^{\ast}(I)=\sum_{i'\in I}f(\{i'\})-k$. Then we have the following dual result.
\begin{proposition}\label{P+P*}
Let $P\subseteq \mathbb{Z}^{n}_{\geq 0}$ be a polymatroid over $[n]$ with rank function $f$.   Then
\begin{enumerate}
\item [(i)] $r'_{k}(P)=r_{k}(P^{\ast})$ and $r_{k}(P)=r'_{k}(P^{\ast})$  for a non-negative integer $k$;
\item [(ii)] $\mathcal{C}_{j}(P)=\{[n]\setminus H|~H\in \mathcal{H}_{j}(P^{\ast})\}$.
\end{enumerate}
\end{proposition}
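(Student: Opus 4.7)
The plan is to reduce both parts to direct manipulation of the defining formula $f^{\ast}(I)=f([n]\setminus I)-f([n])+\sum_{i'\in I}f(\{i'\})$ from (\ref{dual-polymatroid}). For part (i), I would first record that $f^{\ast}([n]) = \sum_{i'\in[n]} f(\{i'\}) - f([n])$, and then subtract (\ref{dual-polymatroid}) from it to obtain the identity
$$
\sum_{i'\in I} f(\{i'\}) - f^{\ast}(I) \;=\; f([n]) - f([n]\setminus I)
$$
for every $I \subseteq [n]$. Consequently, $f^{\ast}(F) \leq f^{\ast}([n]) - k$ is equivalent to $\sum_{i'\in[n]\setminus F} f(\{i'\}) - f([n]\setminus F) \geq k$. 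Substituting $G := [n]\setminus F$ (so that $|G| = n-|F|$) in the minimization defining $r_k(P^{\ast})$ converts it into exactly the one defining $r'_k(P)$, which proves $r'_k(P) = r_k(P^{\ast})$. The companion identity $r_k(P)=r'_k(P^{\ast})$ is obtained by running the same chain of equivalences with the roles of $P$ and $P^{\ast}$ interchanged.

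For part (ii), my strategy is to establish a bijection $H \leftrightarrow C := [n]\setminus H$ between $\mathcal{H}_j(P^{\ast})$ and $\mathcal{C}_j(P)$ by unpacking both defining conditions of $H$ into those of $C$. Substituting into (\ref{dual-polymatroid}) turns the rank equation $f^{\ast}(H) = f^{\ast}([n])-1$ directly into $\sum_{i'\in C} f(\{i'\}) - f(C) = 1$, which is the first defining condition of $\mathcal{C}_j(P)$. For the flatness condition, each $e \in C$ must satisfy $f^{\ast}(H \cup \{e\}) > f^{\ast}(H)$; expanding both sides via (\ref{dual-polymatroid}) collapses this inequality to
$$
f(C\setminus\{e\}) + f(\{e\}) > f(C).
$$
Plugging in the rank equation rewrites it as $f(C\setminus\{e\}) \geq \sum_{i'\in C\setminus\{e\}} f(\{i'\})$, while iterated submodularity supplies the matching upper bound $f(C\setminus\{e\}) \leq \sum_{i'\in C\setminus\{e\}} f(\{i'\})$. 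The two bounds force equality, giving exactly the second defining condition of $\mathcal{C}_j(P)$. Since $|C| = n - |H| = j$, we obtain $C \in \mathcal{C}_j(P)$; the reverse implication follows by reading this chain backwards.

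The one point that requires a bit of care is the sandwich step in (ii), where the strict flatness inequality must be squeezed against the subadditivity bound (a consequence of submodularity and $f(\emptyset)=0$) to collapse to equality. Part (i) is otherwise a routine translation, and its two equations are packaged together because the algebraic identity relating the $r_k$-condition and the $r'_k$-condition is symmetric under the exchange $f \leftrightarrow f^{\ast}$.
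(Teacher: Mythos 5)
Your proof of the first half of part (i), namely $r'_k(P)=r_k(P^{\ast})$, and of part (ii) is correct, and it is essentially the same argument the paper sketches in the sentence preceding the proposition: everything reduces to the algebraic identity $\sum_{i'\in I} f(\{i'\}) - f^{\ast}(I) = f([n]) - f([n]\setminus I)$ obtained by rearranging (\ref{dual-polymatroid}). Your sandwich step in (ii), squeezing the strict flatness inequality against subadditivity to force equality, is exactly the right move to recover the second defining condition of $\mathcal{C}(P)$.

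The derivation of the companion identity $r_k(P)=r'_k(P^{\ast})$ by ``interchanging the roles of $P$ and $P^{\ast}$'' is, however, a genuine gap: this swap tacitly assumes $(P^{\ast})^{\ast}=P$, which fails in general. A direct computation from (\ref{dual-polymatroid}) gives $(f^{\ast})^{\ast}(I)=f(I)+\sum_{i'\in I}\bigl(f([n]\setminus\{i'\})-f([n])\bigr)$, which differs from $f(I)$ whenever some $i'$ has $f([n]\setminus\{i'\})<f([n])$. Equivalently, interchanging $f$ and $f^{\ast}$ in your chain of equivalences produces the condition $\sum_{i'\in G} f(\{i'\}) - f^{\ast}(G)\geq k$, whereas $r'_k(P^{\ast})$ is defined via $\sum_{i'\in G} f^{\ast}(\{i'\}) - f^{\ast}(G)\geq k$; these are not the same, since $f^{\ast}(\{i'\})=f([n]\setminus\{i'\})-f([n])+f(\{i'\})\leq f(\{i'\})$. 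For a concrete failure, take $n=2$, $f(\emptyset)=0$, $f(\{1\})=f(\{1,2\})=2$, $f(\{2\})=1$: then $r_1(P)=1$ (take $F=\{2\}$) while $r'_1(P^{\ast})=2$ (only $F=\{1,2\}$ reaches $1$). You should be aware that this companion equality is asserted but not actually argued in the paper either, and it is never used downstream --- the proof of Theorem \ref{connected-exterior2}(ii) relies only on $r'_2(P)=r_2(P^{\ast})$ together with part (ii). So the issue lies partly with the statement itself, but your proposed justification for it does not hold and would need to be either dropped or replaced by an extra hypothesis (such as $f([n]\setminus\{i'\})=f([n])$ for all $i'$, which restores $(P^{\ast})^{\ast}=P$).
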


We now can state our main result and prove it by applying Lemmas \ref{coefficients-12} and \ref{deletion-contraction-IX}, and Propositions \ref{rank function of Pj}, \ref{rP+r'P} and \ref{P+P*}.

\begin{theorem} \label{connected-exterior2}
Let $P\subseteq \mathbb{Z}^{n}_{\geq 0}$ be a polymatroid with rank function $f$ and $g=\sum _{i'\in [n]}f(\{i'\})-f([n])$. Then
 \begin{itemize}
  \item [(i)] for all $0\leq i<r_{2}(P)$, $$[y^{i}]X_{P}(y)=\binom{f([n])+i-1}{i}-\sum_{j=0}^{i}\binom{f([n])+i-1-j}{i-j}|\mathcal{H}_{j}(P)|;$$
  \item [(ii)] for all $0\leq i<r'_{2}(P)$, $$[x^{i}]I_{P}(x)=\binom{g+i-1}{i}-\sum_{j=0}^{i}\binom{g+i-1-j}{i-j}|\mathcal{C}_{j}(P)|.$$
\end{itemize}
\end{theorem}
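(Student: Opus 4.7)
My plan is to establish part (i) by induction on $n$ using the deletion-contraction formula of Lemma~\ref{deletion-contraction-IX}, and then to derive part (ii) from (i) via the duality $I_P = X_{P^\ast}$ of~\eqref{duality-relation} combined with Proposition~\ref{P+P*}.

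\textbf{Induction for (i).} The base case $n=1$ is immediate: $P$ is a single point, $X_P(y)=1$, and $\mathcal{H}_j(P)=\emptyset$ in the relevant range. For the inductive step, fix some $t\in[n]$ and apply Lemma~\ref{deletion-contraction-IX}; extracting the coefficient of $y^i$ gives
\begin{equation*}
[y^i]X_P(y) = [y^i]X_{P/t}(y) + \sum_{j \in T_t \setminus \{\beta_t\}} [y^{i-1}]X_{\widehat{P}^t_j}(y).
\end{equation*}
I then apply the inductive hypothesis to each summand. This requires verifying that $r_2(P/t) > i$ and $r_2(\widehat{P}^t_j) > i-1$ whenever the corresponding coefficient is nonzero, which can be read off from Proposition~\ref{rank function of Pj}: the rank functions satisfy $f_{P/t}(I) = f(I\cup\{t\}) - \beta_t$ and $f^t_j(I) = \min(f(I), f(I\cup\{t\}) - j)$, and these preserve the bound $r_2 > i$ inherited from $P$ (with $f_{P/t}([n]\setminus\{t\}) = f([n]) - \beta_t$ and $f^t_j([n]\setminus\{t\}) = f([n]) - j$). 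Using Proposition~\ref{rank function of Pj} again, I identify the hyperplanes arising in the smaller polymatroids with hyperplanes of $P$: flats of $P/t$ of appropriate corank correspond to flats of $P$ containing $t$, while flats of $\widehat{P}^t_j$ of the relevant corank, summed over $j\in T_t\setminus\{\beta_t\}$, recover the flats of $P$ not containing $t$ (with the parameter $j$ tracking how $f(\cdot)$ changes upon removing $t$). The remaining step is a binomial manipulation using Pascal's identity
\begin{equation*}
\binom{f([n])+i-1-j}{i-j} = \binom{f([n])+i-2-j}{i-j} + \binom{f([n])+i-2-j}{i-j-1}
\end{equation*}
to telescope the contributions into the desired closed form.

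\textbf{Derivation of (ii).} By definition~\eqref{dual-polymatroid}, $f^\ast([n]) = g$. Applying (i) to $P^\ast$ yields
\begin{equation*}
[y^i]X_{P^\ast}(y) = \binom{g+i-1}{i} - \sum_{j=0}^i \binom{g+i-1-j}{i-j}|\mathcal{H}_j(P^\ast)|
\end{equation*}
for $0 \le i < r_2(P^\ast)$. By~\eqref{duality-relation} the left side equals $[x^i]I_P(x)$, while Proposition~\ref{P+P*} gives $r_2(P^\ast) = r'_2(P)$ and, via the bijection $H\mapsto[n]\setminus H$ that preserves complement-size, $|\mathcal{H}_j(P^\ast)| = |\mathcal{C}_j(P)|$. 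This converts the formula into exactly (ii).

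\textbf{Main obstacle.} The principal technical challenge is the inductive bookkeeping: matching each hyperplane of $P/t$ and each $\widehat{P}^t_j$ to a unique hyperplane of $P$ of the correct complement-size, and organizing the resulting double sum so that it telescopes into a single binomial convolution. The hypothesis $i < r_2(P)$ is essential throughout: it guarantees that for every $G\subseteq[n]$ with $|G|\le i$ one has $f([n]\setminus G) \ge f([n])-1$, which prevents the rank from collapsing and makes the hyperplane correspondence uniform across the induction. A secondary subtlety is that the structure of $\widehat{P}^t_j$ behaves differently depending on whether $\alpha_t=0$ or $\alpha_t\geq 1$, but under $i<r_2(P)$ both cases can be absorbed into the same combinatorial identity.
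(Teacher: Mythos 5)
Your overall strategy matches the paper's (induction on $n$ via the deletion-contraction formula of Lemma~\ref{deletion-contraction-IX}, then duality for part (ii)), and the derivation of (ii) from (i) is essentially identical. However, there is a genuine gap in the inductive step for (i).

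You ``fix some $t\in[n]$'' and suggest that the choice is immaterial, adding that the cases $\alpha_t=0$ and $\alpha_t\geq 1$ ``can be absorbed into the same combinatorial identity.'' This is not so. The telescoping of the Vandermonde-type sum
\[
\binom{f([n])-\beta_t+i-1}{i}+\sum_{k\in T_t\setminus\{\beta_t\}}\binom{f([n])-k+i-2}{i-1}
\]
collapses to $\binom{f([n])+i-1}{i}$ precisely when $T_t=\{0,1,\ldots,\beta_t\}$, i.e., when $\alpha_t=f([n])-f([n]\setminus\{t\})=0$; otherwise the sum stops at $k=\alpha_t$ and one obtains $\binom{f([n])-\alpha_t+i-1}{i}$ instead. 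So the proof must choose $t$ with $f([n]\setminus\{t\})=f([n])$, and the existence of such $t$ (when $n\geq 2$ and $2<r_2(P)$) is itself a lemma: by submodularity, two distinct indices $i',j'$ with $f([n]\setminus\{i'\})=f([n]\setminus\{j'\})=f([n])-1$ would force $f([n]\setminus\{i',j'\})\leq f([n])-2$, contradicting $r_2(P)>2$. Your proposal omits both the choice and the argument for its availability; this is not bookkeeping but the linchpin of the induction.

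A secondary imprecision is in the hyperplane correspondence. You describe flats of $P/t$ as corresponding to flats of $P$ containing $t$ and flats of the remaining $\widehat{P}^t_j$ as recovering flats of $P$ \emph{not} containing $t$. What is actually needed (and what the paper proves) is that for \emph{every} $k\in T_t\setminus\{0\}$ (including $k=\beta_t$, i.e., $P/t$), the hyperplanes of $\widehat{P}^t_k$ of complement size $j$ are in bijection with hyperplanes of $P$ containing $t$ of the same complement size; only the deletion $P\setminus t=\widehat{P}^t_0$ is different, its hyperplanes of complement size $j$ being in bijection with the disjoint union of hyperplanes of $P$ containing $t$ of complement size $j$ and hyperplanes of $P$ \emph{not} containing $t$ of complement size $j+1$. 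This shift by one in the second family is exactly what makes Pascal's identity close the computation; your looser description would not produce it.
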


\begin{proof}
We firstly prove the conclusion (i) by induction on $n$. Note that $\mathcal{H}_{0}(P)=\emptyset$. By Lemma \ref{coefficients-12}(i), we have that $[y^{0}]X_{P}(y)=1=\binom{f([n])+0-1}{0}$.

If $1<r_{2}(P)$, then by Proposition \ref{rP+r'P}(iii), for any $i'\in [n]$, we have that $f([n]\setminus \{i'\})=f([n])$ or $f([n]\setminus \{i'\})=f([n])-1$, moreover, $[n]\setminus \{i'\}\in \mathcal{H}_{1}(P)$ if and only if $f([n]\setminus \{i'\})=f([n])-1$. So, by Lemma \ref{coefficients-12}(ii),
\begin{eqnarray*}
[y]X_{P}(y)&=&\sum_{i'\in [n]}f([n]\setminus \{i'\})-(n-1)f([n])\\
&=&f([n])-\sum_{i'\in [n]}\left(f([n])-f([n]\setminus \{i'\})\right)\\
&=&f([n])-|\mathcal{H}_{1}(P)|\\
&=&\binom{f([n])}{1}-\binom{f([n])+1-1-1}{1-1}|\mathcal{H}_{1}(P)|.
\end{eqnarray*}

So, the conclusion is true for $i=0,1$. Note that $r_{2}(P)\leq n$ by Proposition \ref{rP+r'P}(ii). Then the conclusion is true from above discussion for $n=1$. We now assume that $n\geq 2$ and $2\leq i<r_{2}(P)$.
In this case, the following claim holds.

 \textbf{Claim 1.} $|\mathcal{H}_{1}(P)|\leq 1$ if $2<r_{2}(P)$.

\emph{Proof of Claim 1.} Suppose that $|\mathcal{H}_{1}(P)|\geq 2$.  Without loss of generality, we may assume $\{[n]\setminus \{i'\},[n]\setminus \{j'\}\}\subseteq \mathcal{H}_{1}(P)$.  Then $f([n]\setminus \{i'\})=f([n])-1$ and $f([n]\setminus \{j'\})=f([n])-1$. We have that $f([n]\setminus \{i'\})+f([n]\setminus \{j'\})\geq f([n]\setminus \{i',j'\})+f([n])$ by the submodularity of $f$. This implies that $f([n]\setminus \{i',j'\})\leq f([n])-2$, a contradiction with Proposition \ref{rP+r'P}(iii). Hence, Claim 1 is verified.

 By Claim 1,  there exists an index $t\in [n]$ such that
\begin{eqnarray}\label{rank}
f([n]\setminus \{t\})=f([n]),
\end{eqnarray}
that is, $\alpha_{t}=0$. For any $j \leq i$, denote $$\mathcal{H}^{t}_{j}(P)=\{H|~H\in \mathcal{H}_{j}(P)~\text{and}~t\notin H\}.$$

\textbf{Claim 2.} The following conclusion holds.
\begin{enumerate}
\item [(i)] $\{H\cup \{t\}|~H\in \mathcal{H}_{j}(\widehat{P}^{t}_{k})\}=\mathcal{H}_{j}(P)\setminus \mathcal{H}^{t}_{j}(P)$ for any $j$ with $0\leq j \leq i-1$ when $k\in T_{t}\setminus \{0,f(\{t\})\}$ and for any $0\leq j \leq i$ when $k=f(\{t\})$;
\item [(ii)] $\mathcal{H}_{j}(P\setminus t)=(\mathcal{H}_{j}(P)\setminus \mathcal{H}^{t}_{j}(P))\cup \mathcal{H}^{t}_{j+1}(P)$ for any $j$ with $0\leq j \leq i-1$.
\end{enumerate}

\emph{Proof of Claim 2.} Let $f^{t}_{k}$, $f_{P/t}$ and $f_{P\setminus t}$ be the rank functions of $\widehat{P}^{t}_{k}$, $P/t$ and $P\setminus t$, respectively.

(i).
 By Proposition \ref{rank function of Pj}, we have that $f^{t}_{k}([n]\setminus \{t\})=f([n])-k$ for any $k\in T_{t}\setminus \{0\}$. We first prove that if $$f^{t}_{k}(([n]\setminus \{t\})\setminus I)=f([n]\setminus I)-k \ \text{for any}\  t\in [n]\setminus I \ \text{with} \ |I|\leq j,  ~~~~(\star)$$ then $\{H\cup \{t\}|~H\in \mathcal{H}_{j}(\widehat{P}^{t}_{k})\}=\mathcal{H}_{j}(P)\setminus \mathcal{H}^{t}_{j}(P)$.

 For any $[n]\setminus I \in \mathcal{H}_{j}(P)\setminus \mathcal{H}^{t}_{j}(P)$, we have $t\in [n]\setminus I$, $|I|=j$, $f([n]\setminus I)=f([n])-1$  and $f(([n]\setminus I)\cup \{t'\})=f([n])$ for any $t'\in I$. By the condition ($\star$), we have that
\begin{eqnarray*}
f^{t}_{k}(([n]\setminus \{t\})\setminus I)&=&f([n]\setminus I)-k\\
&=&f([n])-1-k\\
&=&f^{t}_{k}([n]\setminus \{t\})-1.
\end{eqnarray*}

Moreover, for any $t'\in I$,
\begin{eqnarray*}
f^{t}_{k}(([n]\setminus \{t\})\setminus I \cup \{t'\})&=&f(([n]\setminus I)\cup \{t'\})-k\\
&=&f([n])-k\\
&=&f^{t}_{k}([n]\setminus \{t\}).
\end{eqnarray*}
Hence, $([n]\setminus \{t\})\setminus I\in \mathcal{H}_{j}(\widehat{P}^{t}_{k})$. This implies $\mathcal{H}_{j}(P)\setminus \mathcal{H}^{t}_{j}(P)\subseteq \{H\cup \{t\}|~H\in \mathcal{H}_{j}(\widehat{P}^{t}_{k})\}$.

Conversely, if $([n]\setminus \{t\})\setminus I \in \mathcal{H}_{j}(\widehat{P}^{t}_{k})$, then $f_{\widehat{P}^{t}_{k}}(([n]\setminus \{t\})\setminus I)=f_{\widehat{P}^{t}_{k}}([n]\setminus \{t\})-1$ and $f_{\widehat{P}^{t}_{k}}(([n]\setminus \{t\})\setminus I\cup \{t'\})=f_{\widehat{P}^{t}_{k}}([n]\setminus \{t\})$ for any $t'\in I$. By the condition ($\star$), $f([n]\setminus I)=f([n])-1$ and $f([n]\setminus I\cup \{t'\})=f([n])$ for any $t'\in I$. This implies $[n]\setminus I\in \mathcal{H}_{j}(P)$. Note that $t\notin I$, i.e., $t\in [n]\setminus I$. Hence, $[n]\setminus I \in \mathcal{H}_{j}(P)\setminus \mathcal{H}^{t}_{j}(P)$.

Hence, $\{H\cup \{t\}|~H\in \mathcal{H}_{j}(\widehat{P}^{t}_{k})\}=\mathcal{H}_{j}(P)\setminus \mathcal{H}^{t}_{j}(P)$.

We next prove that the condition ($\star$) holds for any $j$ with $0\leq j \leq i-1$ when $k\in T_{t}\setminus \{0,f(\{t\})\}$ and for any $0\leq j \leq i$ when $k=f(\{t\})$. By the definition of $P/t$, we have that $f_{P/t}([n]\setminus I)=f([n]\setminus I)-f\{t\}$ for any $I\subseteq [n]\setminus t$. Note that for any $I\subseteq [n]\setminus \{t\}$ with $|I|\leq j$, we have that $f([n]\setminus I\setminus \{t\})\geq f([n])-1$ as $j\leq i-1<r_{2}(P)-1$. Then $f([n]\setminus I)-k\leq f([n]\setminus I)-1\leq f([n])-1\leq f([n]\setminus I\setminus \{t\})$ for any positive integer $k\geq 1$ and for any $I\subseteq [n]\setminus \{t\}$ with $|I|\leq j$.
Hence, Claim 2(i) is verified.

(ii).  By the definition of $P\setminus t$, for any $I\subseteq [n]\setminus t$, we have that
\begin{eqnarray}\label{rank3}
f_{P\setminus t}([n]\setminus I)=f([n]\setminus I).
\end{eqnarray}

We first prove that $(\mathcal{H}_{j}(P)\setminus \mathcal{H}^{t}_{j}(P))\cup \mathcal{H}^{t}_{j+1}(P)\subseteq \mathcal{H}_{j}(P\setminus t)$.

\begin{enumerate}
\item [(A)] If $[n]\setminus I \in \mathcal{H}_{j}(P)\setminus \mathcal{H}^{t}_{j}(P)$, then $t\notin I$, $|I|=j$, $f([n]\setminus I)=f([n])-1$ and $f([n]\setminus I \cup \{t'\})=f([n])$ for any $t'\in I$. Note that $f([n])-1\leq f(([n]\setminus \{t\})\setminus I)\leq f([n]\setminus I)=f([n])-1$ by the monotonicity of $f$. We have that $f(([n]\setminus \{t\})\setminus I)=f([n])-1$. By the submodularity of $f$, we have that $f(([n]\setminus \{t\})\setminus I\cup \{t'\})+f([n]\setminus I)\geq f(([n]\setminus \{t\})\setminus I)+f([n]\setminus I \cup \{t'\})$. Then $f(([n]\setminus \{t\})\setminus I\cup \{t'\})=f([n])$. By Equations (\ref{rank}) and (\ref{rank3}), we have that $f_{P\setminus t}(([n]\setminus \{t\})\setminus I)=f_{P\setminus t}([n]\setminus \{t\})-1$ and $f_{P\setminus t}(([n]\setminus \{t\})\setminus I\cup \{t'\})=f_{P\setminus t}([n]\setminus \{t\})$ for any $t'\in I$. This implies $([n]\setminus \{t\})\setminus I \in \mathcal{H}_{j}(P\setminus t)$.

\item [(B)] If $[n]\setminus I \in \mathcal{H}^{t}_{j+1}(P)$, then $t\in I$, $|I|=j+1$, $f([n]\setminus I)=f([n])-1$ and $f([n]\setminus I \cup \{t'\})=f([n])$ for any $t'\in I$. By Equations (\ref{rank}) and (\ref{rank3}), we have that $f_{P\setminus t}(([n]\setminus \{t\})\setminus (I\setminus \{t\}))=f_{P\setminus t}([n]\setminus \{t\})-1$ and
 $f_{P\setminus t}(([n]\setminus \{t\})\setminus (I\setminus \{t\})\cup \{t'\})=f_{P\setminus t}([n]\setminus \{t\})$ for any $t'\in I\setminus \{t\}$. Hence, $([n]\setminus \{t\})\setminus (I\setminus \{t\}) \in \mathcal{H}_{j}(P\setminus t)$.
\end{enumerate}
In conclusion, $(\mathcal{H}_{j}(P)\setminus \mathcal{H}^{t}_{j}(P))\cup \mathcal{H}^{t}_{j+1}(P)\subseteq \mathcal{H}_{j}(P\setminus t)$.

Conversely, if $([n]\setminus \{t\})\setminus I \in \mathcal{H}_{j}(P\setminus t)$, then $|I|=j$, $f_{P\setminus t}(([n]\setminus \{t\})\setminus I)=f_{P\setminus t}([n]\setminus \{t\})-1$ and $f_{P\setminus t}(([n]\setminus \{t\})\setminus I\cup \{t'\})=f_{P\setminus t}([n]\setminus \{t\})$ for any $t'\in I$. By Equations (\ref{rank}) and (\ref{rank3}), we have that $f(([n]\setminus \{t\})\setminus I)=f([n])-1$ and $f(([n]\setminus \{t\})\setminus I\cup \{t'\})=f([n])$ for any $t'\in I$.
\begin{enumerate}
\item [(C)] If $f([n]\setminus I)=f([n])-1$, then for any $t'\in I$, $f([n])\geq f(([n]\setminus I)\cup \{t'\}) \geq f(([n]\setminus \{t\})\setminus I\cup \{t'\})=f([n])$ by the monotonicity of $f$. This implies $f(([n]\setminus I)\cup \{t'\})=f([n])$. Hence, $[n]\setminus I\in \mathcal{H}_{j}(P)\setminus \mathcal{H}^{t}_{j}(P)$.

\item [(D)] If $f([n]\setminus I)=f([n])$, then $f([n]\setminus (I\cup \{t\}))=f([n])-1$ and $f(([n]\setminus \{t\})\setminus I\cup \{t'\})=f([n])$ for any $t'\in I\cup \{t\}$. This implies $[n]\setminus (I\cup \{t\})\in \mathcal{H}^{t}_{j+1}(P)$.
\end{enumerate}

Hence, $\mathcal{H}_{j-1}(P\setminus t)\subseteq(\mathcal{H}_{j}(P)\setminus \mathcal{H}^{t}_{j}(P))\cup \mathcal{H}^{t}_{j+1}(P)$.
We complete the proof of Claim 2(ii).

By the induction hypothesis, Claim 2 and Lemma \ref{deletion-contraction-IX}, we know that
\begin{eqnarray*}
&&[y^{i}]X_{P}(y)\\
&=&[y^{i}]X_{P/t}(y)+\sum_{k\in T_{t}\setminus \{f(\{t\})\}}[y^{i-1}]X_{\widehat{P}^{t}_{k}}(y)\\
&=&\binom{f([n])-f(\{t\})+i-1}{i}-\sum_{j=0}^{i}\binom{f([n])-f(\{t\})+i-1-j}{i-j}|\mathcal{H}_{j}(P/t)|\\
&+&\sum_{k\in T_{t}\setminus \{f(\{t\})\}}\left(\binom{f([n])-k+i-2}{i-1}-\sum_{j=0}^{i-1}\binom{f([n])-k+i-2-j}{i-1-j}|\mathcal{H}_{j}(\widehat{P}^{t}_{k})|\right)\\
&=&\left(\binom{f([n])-f(\{t\})+i-1}{i}+\sum_{k\in T_{t}\setminus \{f(\{t\})\}}\binom{f([n])-k+i-2}{i-1}\right)\\
&-&\sum_{j=0}^{i}\binom{f([n])-f(\{t\})+i-1-j}{i-j}(|\mathcal{H}_{j}(P)|-|\mathcal{H}^{t}_{j}(P)|)\\
&-&\sum_{k\in T_{t}\setminus \{f(\{t\}),0\}}\left(\sum_{j=0}^{i-1}\binom{f([n])-k+i-2-j}{i-1-j}(|\mathcal{H}_{j}(P)|-|\mathcal{H}^{t}_{j}(P)|)\right)\\
&-&\sum_{j=0}^{i-1}\binom{f([n])+i-2-j}{i-1-j}(|\mathcal{H}_{j}(P)|-|\mathcal{H}^{t}_{j}(P)|+|\mathcal{H}^{t}_{j+1}(P)|)\\
&=&\binom{f([n])+i-1}{i}-\sum_{j=0}^{i}\binom{f([n])-f(\{t\})+i-1-j}{i-j}(|\mathcal{H}_{j}(P)|-|\mathcal{H}^{t}_{j}(P)|)\\
&-&\sum_{j=0}^{i-1}\sum_{k\in T_{t}\setminus \{f(\{t\}),0\}}\binom{f([n])-k+i-2-j}{i-1-j}(|\mathcal{H}_{j}(P)|-|\mathcal{H}^{t}_{j}(P)|)\\
&-&\sum_{j=0}^{i-1}\binom{f([n])+i-2-j}{i-1-j}(|\mathcal{H}_{j}(P)|-|\mathcal{H}^{t}_{j}(P)|+|\mathcal{H}^{t}_{j+1}(P)|)\\
&=&\binom{f([n])+i-1}{i}-\sum_{j=0}^{i-1}\binom{f([n])+i-1-j}{i-j}|\mathcal{H}_{j}(P)|-|\mathcal{H}_{i}(P)|+|\mathcal{H}^{t}_{i}(P)|\\
&+&\sum_{j=0}^{i-1}\binom{f([n])+i-1-j}{i-j}|\mathcal{H}^{t}_{j}(P)|-\sum_{j=0}^{i-1}\binom{f([n])+i-2-j}{i-1-j}|\mathcal{H}^{t}_{j+1}(P)|\\
&=&\binom{f([n])+i-1}{i}-\sum_{j=0}^{i-1}\binom{f([n])+i-1-j}{i-j}|\mathcal{H}_{j}(P)|-|\mathcal{H}_{i}(P)|+|\mathcal{H}^{t}_{i}(P)|\\
&+&\sum_{j=0}^{i-1}\binom{f([n])+i-1-j}{i-j}|\mathcal{H}^{t}_{j}(P)|-\sum_{j=1}^{i}\binom{f([n])+i-1-j}{i-j}|\mathcal{H}^{t}_{j}(P)|\\
&=&\binom{f([n])+i-1}{i}-\sum_{j=0}^{i-1}\binom{f([n])+i-1-j}{i-j}|\mathcal{H}_{j}(P)|-|\mathcal{H}_{i}(P)|+|\mathcal{H}^{t}_{i}(P)|\\
&+&\binom{f([n])+i-1}{i}|\mathcal{H}^{t}_{0}(P)|-|\mathcal{H}^{t}_{i}(P)|\\
&=&\binom{f([n])+i-1}{i}-\sum_{j=0}^{i}\binom{f([n])+i-1-j}{i-j}|\mathcal{H}_{j}(P)|.
\end{eqnarray*}
So, the conclusion (i) holds.

We next prove the conclusion (ii). Let $P^{\ast}$ be the polymatroid given by  $f^{\ast}$ (see Equation (\ref{dual-polymatroid})). Note that $f^{\ast}([n])=\sum _{i'\in [n]}f(\{i'\})-f([n])$, and $[x^{i}]I_{P}(x)=[y^{i}]X_{P^{\ast}}(y)$ by Equation (\ref{duality-relation}).
 Hence, the conclusion (ii) holds from the conclusion (i) and Proposition \ref{P+P*}.
\end{proof}

\begin{remark} Our main theorem Theorem \ref{connected-exterior2} includes both Theorem \ref{connected} and Theorem \ref{connected-exterior}.
\begin{itemize}
\item[(a)]If $P=P(M)$ is the corresponding polymatroid of a matroid $M$, then $r_{2}(P(M))=f_{2}(M)$, $r'_{2}(P(M))=f'_{2}(M)$, $\mathcal{H}_{j}(P(M))=\mathcal{H}_{j}(M)$, $\mathcal{C}_{j}(P(M))=\mathcal{C}_{j}(M)$. Hence, Theorem \ref{connected-exterior2} is a generalization of Theorem \ref{connected} from matroids to polymatroids.
\item[(b)]It is obvious that for any integer $k$, we have that $|\mathcal{H}_{j}(P)|=0$ for any $j\leq k$ if and only if $f([n]\setminus J)=f([n])$ for all $J\subseteq [n]$ with $|J|=k$. So, Theorem \ref{connected-exterior} is a special case of Theorem \ref{connected-exterior2}.
    \end{itemize}
    \end{remark}

\begin{example}\label{EX}
Let $P\subseteq \mathbb{Z}^{5}$ be a polymatroid. The rank function $f:2^{\{1,2,3,4,5\}}\rightarrow \mathbb{Z}$ of $P$ is given by $f(\emptyset)=0$, $f(\{1\})=f(\{3\})=1$, $f(\{2\})=f(\{4\})=f(\{5\})=f(\{1,2\})=f(\{1,3\})=f(\{2,3\})=f(\{4,5\})=f(\{1,2,3\})=2$ and $f(I)=3$ for any other $I\subseteq \{1,2,3,4,5\}$. It is easy to see that $r_{2}(P)=4$, $\mathcal{H}_{0}(P)=\mathcal{H}_{1}(P)=\emptyset$, $\mathcal{H}_{2}(P)=\{\{1,2,3\}\}$, $\mathcal{H}_{3}(P)=\{\{4,5\}\}$.
By \cite[Example 3.4]{Guan6}, we know that
$$X_{P}(y)=1+3y+5y^{2}+6y^{3}+3y^{4}+y^{5}.$$
Hence, $[y]X_{P}(y)=3=\binom{3}{1}$, $[y^{2}]X_{P}(y)=5=\binom{4}{2}-|\mathcal{H}_{2}(P)|$ and $[y^{3}]X_{P}(y)=6=\binom{5}{3}-\binom{3}{1}|\mathcal{H}_{2}(P)|-|\mathcal{H}_{3}(P)|$.
\end{example}

Let $G$ be a simple graph. For a vertex $v$, let $deg_{G}(v)$ denote the number of edges incident with $v$ in the graph $G$. Then Theorem \ref{connected-exterior2} implies the following result for hypergraphical polymatroids.
\begin{corollary}
Let $\mathcal{H}=(V,E)$ be a connected hypergraph and let $\mathrm{Bip} \mathcal{H}$ be its associated bipartite graph. Denote $r_{2}(\mathcal{H})=\min\{|H'|:~c(\mathrm{Bip} \mathcal{H}|_{E\setminus H'})\geq 3 ~\text{for}~H'\subseteq E\}$, $\mathcal{H}_{j}(\mathcal{H})=\{H'\subseteq E:~c(\mathrm{Bip} \mathcal{H}|_{E\setminus H'})=2,  c(\mathrm{Bip} \mathcal{H}|_{(E\setminus H')\cup \{e\}})=1~\text{for any} ~ e\in E\setminus H'~ \text{and}~|H|=j\}$, $g=\sum_{e\in E}deg_{\mathrm{Bip} \mathcal{H}}(e)-|E|-|V|$, $r'_{2}(\mathcal{H})=\min\{|H'|:$ there are at least 2 cycles in $\mathrm{Bip} \mathcal{H}|_{H'}\}$ and $\mathcal{C}_{j}(\mathcal{H})=\{H'\subseteq E:~\mathrm{Bip} \mathcal{H}|_{H'}$ has exactly one cycle and its length is~$2j=2|H'|$$\}$. Then

\begin{itemize}
  \item [(i)] for all $0\leq i<r_{2}(\mathcal{H})$, $$[y^{i}]X_{\mathcal{H}}(y)=\binom{|V|-2+i}{i}-\sum_{j=0}^{i}\binom{|V|+i-2-j}{i-j}|\mathcal{H}_{j}(\mathcal{H})|;$$
  \item [(ii)] for all $0\leq i <r'_{2}(\mathcal{H})$, $$[x^{i}]I_{\mathcal{H}}(x)=\binom{g+i-2}{i}-\sum_{j=0}^{i}\binom{g+i-1-j}{i-j}|\mathcal{C}_{j}(\mathcal{H})|.$$
\end{itemize}
\end{corollary}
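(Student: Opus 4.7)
The plan is to deduce both parts of the corollary directly from Theorem~\ref{connected-exterior2} applied to the hypergraphical polymatroid $P_{\mathcal{H}}$, whose rank function is the submodular map $\mu$ from Section~2. Because $\mathcal{H}$ is connected, $\mathrm{Bip}\mathcal{H}$ is connected and $\mu(E)=|V|-1$; and for a single hyperedge $e\in E$ we have $\mu(\{e\})=|e|-1=\deg_{\mathrm{Bip}\mathcal{H}}(e)-1$. Substituting $f([n])=|V|-1$ turns the leading binomial $\binom{f([n])+i-1}{i}$ of Theorem~\ref{connected-exterior2}(i) into $\binom{|V|+i-2}{i}$, matching~(i); for~(ii) one plugs in $g=\sum_{e\in E}\mu(\{e\})-\mu(E)=\sum_{e\in E}\deg_{\mathrm{Bip}\mathcal{H}}(e)-|E|-|V|+1$ (up to the off-by-one in the displayed expression for $g$ in the statement).

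The substantive work is to verify that the polymatroid objects $r_2(P_{\mathcal{H}})$, $\mathcal{H}_j(P_{\mathcal{H}})$, $r'_2(P_{\mathcal{H}})$, and $\mathcal{C}_j(P_{\mathcal{H}})$ coincide with the hypergraph quantities in the statement. A flat $H\subseteq E$ with $\mu(H)=\mu(E)-1$ and $|E|-|H|=j$ translates, via $H':=E\setminus H$, to a subset with $|H'|=j$, $c(\mathrm{Bip}\mathcal{H}|_{E\setminus H'})=2$, and $c(\mathrm{Bip}\mathcal{H}|_{(E\setminus H')\cup\{e\}})=1$ for every $e\in H'$, which is exactly the defining condition of $\mathcal{H}_j(\mathcal{H})$. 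The quantity $r_2(P_{\mathcal{H}})$ equals the smallest $|H'|$ for which $\mu(E\setminus H')\leq|V|-3$, i.e.\ $c(\mathrm{Bip}\mathcal{H}|_{E\setminus H'})\geq 3$.

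For the dual side, I would unpack the identities defining $\mathcal{C}(P_{\mathcal{H}})$ via $\mu(C)=|\bigcup C|-c(C)$ together with a simple edge count in $\mathrm{Bip}\mathcal{H}|_C$: the condition $\sum_{e\in C}\mu(\{e\})-\mu(C)=1$ is equivalent to $\mathrm{Bip}\mathcal{H}|_C$ having exactly one more edge than a spanning forest, hence containing a unique cycle. The auxiliary requirement $\sum_{e'\in C\setminus\{e\}}\mu(\{e'\})=\mu(C\setminus\{e\})$ for every $e\in C$ then forces $\mathrm{Bip}\mathcal{H}|_{C\setminus\{e\}}$ to be a forest, meaning every hyperedge of $C$ lies on the unique cycle, so its length is $2|C|=2j$, matching $\mathcal{C}_j(\mathcal{H})$; the translation $r'_2(P_{\mathcal{H}})=r'_2(\mathcal{H})$ is analogous. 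The main obstacle is careful bookkeeping in these bijections, particularly handling the convention that $\mathrm{Bip}\mathcal{H}|_{E'}$ excludes isolated $V$-vertices from its component count: one must check that in the range $i<r_2(P_{\mathcal{H}})$ (respectively $i<r'_2(P_{\mathcal{H}})$) no relevant flat or circuit becomes ``degenerate'' by losing a vertex of $V$. Once these identifications are in place, both parts of the corollary reduce to a direct transcription of Theorem~\ref{connected-exterior2}.
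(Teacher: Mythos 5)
Your approach is exactly the paper's: specialize Theorem~\ref{connected-exterior2} to the hypergraphical polymatroid $P_{\mathcal{H}}$, using $\mu(E)=|V|-1$ and $\mu(\{e\})=\deg_{\mathrm{Bip}\mathcal{H}}(e)-1$, and identify the polymatroid data $r_{2}(P_{\mathcal{H}})$, $\mathcal{H}_{j}(P_{\mathcal{H}})$, $r'_{2}(P_{\mathcal{H}})$, $\mathcal{C}_{j}(P_{\mathcal{H}})$ with the hypergraph quantities. The paper's own proof simply asserts these identifications ``are clear'' and then invokes the theorem, whereas you spell out the combinatorial translation (components, cycles, edge count in $\mathrm{Bip}\mathcal{H}|_{C}$, the isolated-vertex convention) more explicitly; that additional care is sound and does not change the route.

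One point worth sharpening: you flag an ``off-by-one'' in the displayed $g$, but the discrepancy in part~(ii) as printed is actually worse and internally inconsistent. Plugging $\sum_{e}\mu(\{e\})-\mu(E)=\sum_{e}\deg_{\mathrm{Bip}\mathcal{H}}(e)-|E|-|V|+1$ into Theorem~\ref{connected-exterior2}(ii) yields, with the statement's $g=\sum_{e}\deg_{\mathrm{Bip}\mathcal{H}}(e)-|E|-|V|$, the leading term $\binom{g+i}{i}$ and general term $\binom{g+i-j}{i-j}$, whereas the statement prints $\binom{g+i-2}{i}$ and $\binom{g+i-1-j}{i-j}$. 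No single shift of $g$ reconciles both binomials simultaneously, so the stated~(ii) contains a typographical error that your ``up to the off-by-one'' parenthetical does not fully capture; the corrected formula is the one your calculation actually produces.

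Since you implement the same reduction to Theorem~\ref{connected-exterior2} as the paper and your computations are correct (and in fact expose the issue above), the proposal is essentially right modulo tracking that discrepancy to its resolution.
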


\begin{proof}
Let $P_{\mathcal{H}}$ be the polymatroid induced by $\mathcal{H}$ and let $f$ be the rank function of  $P_{\mathcal{H}}$. It is clear that $f(E)=|V|-1$, $r_{2}(\mathcal{H})=r_{2}(P_{\mathcal{H}})$, $r'_{2}(\mathcal{H})=r'_{2}(P_{\mathcal{H}})$, $\mathcal{H}_{j}(\mathcal{H})=\mathcal{H}_{j}(P_{\mathcal{H}})$, $\mathcal{C}_{j}(\mathcal{H})=\mathcal{C}_{j}(P_{\mathcal{H}})$ and
\begin{eqnarray*}
\sum_{e\in E}f(e)&=&\sum_{e\in E}(deg_{\mathrm{Bip} \mathcal{H}}(e)-1)\\
&=& \sum_{e\in E}deg_{\mathrm{Bip} \mathcal{H}}(e)-|E|.
\end{eqnarray*}
 Hence, the conclusion follows from Theorem \ref{connected-exterior2}.
\end{proof}

In Theorem \ref{connected-exterior2}, it is obvious that for any integer $k$, $|\mathcal{C}_{j}(P)|=0$ for any $j\leq k$ if and only if $\sum _{i'\in J}f(\{i'\})=f(J)$ for all $J\subseteq [n]$ with $|J|=k$. Hence, the following conclusion is true.
\begin{corollary} \label{connected-interior}
Let $P\subseteq \mathbb{Z}^{n}_{\geq 0}$ be a polymatroid and let $f$ be its rank function. Then for all $k\leq n-1$, the coefficient $$[x^{i}]I_{P}(x)=\binom{\sum _{i'\in [n]}f(\{i'\})-f([n])+i-1}{i}$$ for all $i\leq k$ if and only if $\sum _{i'\in J}f(\{i'\})=f(J)$ for all $J\subseteq [n]$ with $|J|=k$.
\end{corollary}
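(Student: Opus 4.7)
The plan is to deduce the corollary from the duality $I_P(x)=X_{P^*}(x)$ of Equation (\ref{duality-relation}) together with Corollary \ref{connected-exterior} applied to the dual polymatroid $P^*$.

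First, I would extract two short identities from the definition $f^*(I)=f([n]\setminus I)-f([n])+\sum_{i'\in I}f(\{i'\})$ of the dual rank function. Setting $I=[n]$ gives $f^*([n])=\sum_{i'\in [n]}f(\{i'\})-f([n])=g$. Setting $I=[n]\setminus J$ and rearranging gives
$$f^*([n]\setminus J)=f^*([n])-\Bigl(\sum_{i'\in J}f(\{i'\})-f(J)\Bigr),$$
so the condition $f^*([n]\setminus J)=f^*([n])$ is equivalent to $\sum_{i'\in J}f(\{i'\})=f(J)$.

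Next I would apply Corollary \ref{connected-exterior} with $P^*$ in place of $P$: for $k\leq n-1$, the coefficient $[y^{i}]X_{P^*}(y)$ equals $\binom{f^*([n])+i-1}{i}$ for all $i\leq k$ if and only if $f^*([n]\setminus J)=f^*([n])$ for all $|J|=k$. Substituting $f^*([n])=g$ and the equivalence from the previous step, together with $[x^{i}]I_{P}(x)=[y^{i}]X_{P^*}(y)$ from Equation (\ref{duality-relation}), yields exactly the claimed iff.

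A direct route via Theorem \ref{connected-exterior2}(ii) would also work cleanly for the backward direction: assuming $\sum_{i'\in J}f(\{i'\})=f(J)$ for every $|J|=k$, a short submodularity argument propagates the equality to every $|J'|\leq k$ (by extending $J'$ to some $J$ of size $k$ and combining $f(J)\leq f(J')+\sum_{i'\in J\setminus J'}f(\{i'\})$ with the reverse submodular inequality), forcing both $r'_2(P)>k$ and $|\mathcal{C}_{j}(P)|=0$ for $j\leq k$, which collapses the formula to $\binom{g+i-1}{i}$. The forward direction, however, runs into the obstacle that Theorem \ref{connected-exterior2}(ii) is only valid for $i<r'_2(P)$, and this is precisely the restriction that the dual approach sidesteps, since Corollary \ref{connected-exterior} holds uniformly for all $k\leq n-1$ on the exterior side.
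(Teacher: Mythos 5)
Your proof is correct, and it takes a genuinely different route from the paper. The paper disposes of Corollary~\ref{connected-interior} in one sentence: it asserts that ``$|\mathcal{C}_{j}(P)|=0$ for any $j\leq k$ if and only if $\sum_{i'\in J}f(\{i'\})=f(J)$ for all $|J|=k$'' and concludes directly from Theorem~\ref{connected-exterior2}(ii). You instead pass to the dual $P^{\ast}$, use $I_{P}=X_{P^{\ast}}$ from Equation~(\ref{duality-relation}), compute $f^{\ast}([n])=g$ and the identity $f^{\ast}([n]\setminus J)=f^{\ast}([n])-\bigl(\sum_{i'\in J}f(\{i'\})-f(J)\bigr)$, and invoke Corollary~\ref{connected-exterior} (the cited result of \cite{Guan4}) for $P^{\ast}$. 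All these identities check out, so the derivation is complete.

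Your route is actually the more careful one. The paper's ``obvious'' equivalence is not quite right for general polymatroids: once some $f(\{i'\})\geq 2$, the gap $\sum_{i'\in J}f(\{i'\})-f(J)$ can jump from $0$ to $\geq 2$ when a single element is added, so that a set $J$ with $|J|=k$ can have nonzero gap while $\mathcal{C}_{j}(P)=\emptyset$ for every $j\leq k$ (the analogous statement about $\mathcal{H}_{j}$ in Remark~(b) fails for the same reason). And, as you correctly flag, even granting that equivalence the forward direction of the iff does not follow directly from Theorem~\ref{connected-exterior2}(ii) because that theorem's conclusion is only valid for $i<r'_{2}(P)$; nothing in the hypothesis ``$[x^{i}]I_{P}(x)=\binom{g+i-1}{i}$ for all $i\leq k$'' rules out $r'_{2}(P)=k$. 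Appealing to Corollary~\ref{connected-exterior}, which is stated as an unconditional iff for all $k\leq n-1$ and is established independently in \cite{Guan4}, and transporting it across the duality $I_{P}=X_{P^{\ast}}$ sidesteps both issues cleanly. Your sketch of the submodularity propagation for the backward direction (extending $J'$ to $J$ and combining $f(J)\leq f(J')+\sum_{i'\in J\setminus J'}f(\{i'\})$ with $f(J')\leq\sum_{i'\in J'}f(\{i'\})$) is also sound.
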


We end this section with an immediate consequence of Corollary \ref{connected-interior} for hypergraphical polymatroids.
\begin{corollary}
Let $\mathcal{H}=(V,E)$ be a connected hypergraph and let $\mathrm{Bip} \mathcal{H}$ be its associated bipartite graph. Then for all $k\leq |E|$, we have $[x^{i}]I_{\mathcal{H}}=\binom{\sum_{e\in E}deg_{\mathrm{Bip} \mathcal{H}}(e)-|E|-|V|+i-2}{i}$ for all $i\leq k$ if and only if the length of a shortest cycle of $\mathrm{Bip} \mathcal{H}$ is at least $2k+2$.
\end{corollary}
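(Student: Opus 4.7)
My plan is to apply Corollary~\ref{connected-interior} to the hypergraphical polymatroid $P_\mathcal{H}$ induced by $\mathcal{H}$, and then translate the algebraic additivity condition appearing there into the stated girth condition on $\mathrm{Bip} \mathcal{H}$.

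First I would record the rank-function data of $P_\mathcal{H}$ in terms of $\mathrm{Bip} \mathcal{H}$. Because $\mathcal{H}$ is connected, its rank function $\mu$ satisfies $\mu(E) = |V| - 1$, and for any single hyperedge $e$, $\mu(\{e\}) = |e| - 1 = \deg_{\mathrm{Bip} \mathcal{H}}(e) - 1$ (since $\mathrm{Bip} \mathcal{H}|_{\{e\}}$ is a star with $|e|$ leaves). Substituting these values into the exponent of the binomial coefficient in Corollary~\ref{connected-interior} yields the expression in the displayed formula, so it remains to prove that the additivity condition ``$\sum_{e \in J} \mu(\{e\}) = \mu(J)$ for every $J \subseteq E$ with $|J| = k$'' is equivalent to the shortest cycle of $\mathrm{Bip} \mathcal{H}$ having length at least $2k+2$.

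Next I would recognise the equality $\sum_{e \in J} \mu(\{e\}) = \mu(J)$ as the assertion that $\mathrm{Bip} \mathcal{H}|_J$ is a forest. Unpacking the definitions produces
\begin{equation*}
\sum_{e \in J} \mu(\{e\}) - \mu(J) = \sum_{e \in J} \deg_{\mathrm{Bip} \mathcal{H}}(e) - |J| - \bigl|\bigcup J\bigr| + c(J).
\end{equation*}
The subgraph $\mathrm{Bip} \mathcal{H}|_J$ has $\sum_{e \in J}\deg_{\mathrm{Bip} \mathcal{H}}(e)$ edges, $|J| + |\bigcup J|$ vertices, and $c(J)$ connected components, so the right-hand side is precisely the first Betti number of $\mathrm{Bip} \mathcal{H}|_J$. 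Hence the equality holds for $J$ if and only if $\mathrm{Bip} \mathcal{H}|_J$ contains no cycle.

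Finally I would convert ``every $k$-subset $J \subseteq E$ induces a forest in $\mathrm{Bip} \mathcal{H}$'' into the girth bound via bipartiteness. Any cycle of $\mathrm{Bip} \mathcal{H}$ has even length $2\ell$ and uses exactly $\ell$ hyperedges, because its vertices alternate between $V$ and $E$. For the forward direction, if the girth is at least $2k+2$, then every cycle uses at least $k+1$ hyperedges, so no $k$-subset $J$ can contain the full hyperedge set of any cycle and $\mathrm{Bip} \mathcal{H}|_J$ is acyclic. For the converse, any cycle of length $2\ell \leq 2k$ uses only $\ell \leq k$ hyperedges, which, using $k \leq |E|$, may be extended to a set $J$ with $|J| = k$ whose induced subgraph still contains that cycle. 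The most delicate point is the cyclomatic rewriting in the middle step, where one must carefully match $\sum \deg_{\mathrm{Bip} \mathcal{H}}(e)$, $|J|$, and $|\bigcup J|$ with the edge, $E$-side vertex, and $V$-side vertex counts of $\mathrm{Bip} \mathcal{H}|_J$; once this identification is in place, the rest of the argument is routine.
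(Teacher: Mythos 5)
Your approach is the natural one and, as far as I can tell, it is exactly the argument the paper has in mind; the paper itself presents this statement as an ``immediate consequence'' of Corollary~\ref{connected-interior} and supplies no proof. The translation of the additivity condition $\sum_{e\in J}\mu(\{e\})=\mu(J)$ into the vanishing of the first Betti number of $\mathrm{Bip}\mathcal{H}|_J$ is correct, as is the subsequent bipartiteness argument: cycles of $\mathrm{Bip}\mathcal{H}$ alternate sides and have length $2\ell$ using exactly $\ell$ hyperedges, so ``every $k$-subset $J$ of $E$ induces a forest'' is equivalent to ``every cycle of $\mathrm{Bip}\mathcal{H}$ uses at least $k+1$ hyperedges,'' which (using $k\leq|E|$ for the extension step in the converse) is the girth bound $\geq 2k+2$.

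The one place you should not have been so quick is the sentence ``Substituting these values into the exponent of the binomial coefficient in Corollary~\ref{connected-interior} yields the expression in the displayed formula.'' It does not. With $\mu(\{e\})=\deg_{\mathrm{Bip}\mathcal{H}}(e)-1$ and $\mu(E)=|V|-1$, the exponent in Corollary~\ref{connected-interior} becomes
\begin{equation*}
\sum_{e\in E}\mu(\{e\})-\mu(E)+i-1=\Bigl(\sum_{e\in E}\deg_{\mathrm{Bip}\mathcal{H}}(e)-|E|\Bigr)-(|V|-1)+i-1=\sum_{e\in E}\deg_{\mathrm{Bip}\mathcal{H}}(e)-|E|-|V|+i,
\end{equation*}
whereas the statement you were asked to prove has $\sum_{e\in E}\deg_{\mathrm{Bip}\mathcal{H}}(e)-|E|-|V|+i-2$, i.e.\ an extra $-2$. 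This shift is present already in part (ii) of the earlier hypergraph corollary, whose $g$ is set to $\sum_{e\in E}\deg_{\mathrm{Bip}\mathcal{H}}(e)-|E|-|V|$ rather than the correct $\sum_{e\in E}f(\{e\})-f(E)=\sum_{e\in E}\deg_{\mathrm{Bip}\mathcal{H}}(e)-|E|-|V|+1$, and then uses $\binom{g+i-2}{i}$ where Theorem~\ref{connected-exterior2}(ii) would give $\binom{g+i-1}{i}$; by contrast, part (i) of that corollary (with $\binom{|V|-2+i}{i}$) is consistent with $\mu(E)=|V|-1$. So the discrepancy is almost certainly an arithmetic slip in the paper rather than in your argument, but asserting a match without checking is exactly what lets such errors propagate; you should have flagged that the computed binomial is $\binom{\sum_{e\in E}\deg_{\mathrm{Bip}\mathcal{H}}(e)-|E|-|V|+i}{i}$ and noted the off-by-two against the stated corollary.
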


\section{Unimodality}
\noindent

In this section, as an application we study the unimodality of coefficients of exterior and interior polynomials obtained in Theorem \ref{connected-exterior2}. First, we recall the definition of the unimodality of a sequence.
\begin{definition}
For a  sequence $a_{0}$,$a_{1}$,$\ldots$,$a_{n}$ of real numbers, we call the sequence \emph{unimodal} if there exists an integer $k$ such that $a_{i}\leq a_{i+1}$ when $i<k$, and $a_{i}\geq a_{i+1}$ when $i>k$.
\end{definition}
\begin{theorem}\label{unimodality}
Let $P\subseteq \mathbb{Z}^{n}_{\geq 0}$ be a polymatroid. Then
 \begin{itemize}
  \item [(i)] $[y^{0}]X_{P}(y)$,$[y^{1}]X_{P}(y)$,$\ldots$,$[y^{r_{2}(P)-1}]X_{P}(y)$ is unimodal;
  \item [(ii)] $[x^{0}]I_{P}(x)$,$[x^{1}]I_{P}(x)$,$\ldots$,$[x^{r'_{2}(P)-1}]I_{P}(x)$ is unimodal.
\end{itemize}
\end{theorem}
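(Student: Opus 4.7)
The plan is to reduce both parts to a single unimodality claim about coefficients of a specific rational function, and then settle that claim by an induction on the exponent of the denominator. Starting from Theorem \ref{connected-exterior2}(i), the identity $(1-y)^{-d} = \sum_{m \ge 0} \binom{d+m-1}{m} y^m$ rewrites the explicit formula as
\[
a_i := [y^i] X_P(y) = [y^i] \frac{1 - h(y)}{(1-y)^d}, \qquad 0 \le i < r_2(P),
\]
where $d = f([n])$ and $h(y) = \sum_{j \ge 0} |\mathcal{H}_j(P)|\, y^j$. I then introduce the auxiliary sequences $\alpha^{(k)}_n := [y^n]\bigl((1-h(y))(1-y)^{-k}\bigr)$ for $k \ge 0$ and $n \ge 0$, so that $a_i = \alpha^{(d)}_i$ on the relevant range.

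The heart of the argument is to establish, by induction on $k$, that $\alpha^{(k)}$ is \emph{sign-unimodal}: there is an index $K_k$ with $\alpha^{(k)}_n > 0$ for $n < K_k$ and $\alpha^{(k)}_n \le 0$ for $n \ge K_k$. The base case $k = 0$ is immediate because $\alpha^{(0)}_0 = 1$ while $\alpha^{(0)}_n = -|\mathcal{H}_n(P)| \le 0$ for $n \ge 1$. For the inductive step, multiplying by $(1-y)^{-1}$ converts coefficients into partial sums, so $\alpha^{(k+1)}_n - \alpha^{(k+1)}_{n-1} = \alpha^{(k)}_n$; combined with $\alpha^{(k+1)}_0 = 1$, the sequence $\alpha^{(k+1)}$ remains $\ge 1$ on $[0, K_k-1]$ and becomes non-increasing on $[K_k, \infty)$, so once it crosses zero it stays non-positive. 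The main subtlety, and the step I expect to be the main obstacle in presenting the argument cleanly, is recognizing that sign-unimodality (rather than non-negativity, or ordinary unimodality) is exactly the invariant preserved by the partial-sum operation with a positive initial value.

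To conclude part (i), the same partial-sum relation gives $a_{i+1} - a_i = \alpha^{(d-1)}_{i+1}$ for $0 \le i \le r_2(P) - 2$, so the sign-unimodality of $\alpha^{(d-1)}$ passes to the sign pattern of the first differences of $(a_i)$, which is precisely the definition of unimodality for $a_0, \ldots, a_{r_2(P)-1}$. For part (ii), I will apply part (i) to the dual polymatroid $P^*$: by Equation \eqref{duality-relation}, $I_P(x) = X_{P^*}(x)$, and by Proposition \ref{P+P*}(i), $r_2(P^*) = r'_2(P)$, so the same argument applied to $P^*$ yields the desired unimodality of $[x^0] I_P(x), \ldots, [x^{r'_2(P)-1}] I_P(x)$.
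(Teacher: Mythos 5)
Your proof is correct and takes a genuinely different route from the paper's, though both start from the closed-form expression of Theorem \ref{connected-exterior2}. The paper's argument is a direct ratio estimate on the explicit formula: for $f([n])\ge 2$ it shows that
\[
[y^{i+1}]X_{P}(y)-[y^{i}]X_{P}(y)\;\le\;\frac{f([n])+i-1}{i+1}\left([y^{i}]X_{P}(y)-[y^{i-1}]X_{P}(y)\right),
\]
using the identity $\binom{a+1}{b+1}=\frac{a+1}{b+1}\binom{a}{b}$ together with $\frac{a-j}{b-j}\ge\frac{a}{b}$ for $a\ge b>j$; this forces the paper to dispose of $f([n])\in\{0,1\}$ as separate trivial cases. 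You instead repackage the coefficient formula as $a_i=[y^i]\,(1-h(y))(1-y)^{-d}$ and induct on the denominator exponent, identifying sign-unimodality as the invariant preserved by the partial-sum operation $\alpha^{(k+1)}_n=\sum_{m\le n}\alpha^{(k)}_m$; the unimodality of $(a_i)$ then reads off from $a_{i+1}-a_i=\alpha^{(d-1)}_{i+1}$. Both arguments are proving the same underlying fact --- that the first differences of $(a_i)$ are positive then nonpositive --- but yours is structural and handles all $d\ge1$ uniformly (where the paper's ratio bound genuinely needs $d\ge2$), at the cost of introducing the auxiliary family $\alpha^{(k)}$. Part (ii) is handled identically in both, by duality. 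The one small loose end in your write-up is $d=f([n])=0$, where $\alpha^{(d-1)}$ is undefined; but then $\mathcal{H}_j(P)=\emptyset$ for all $j$, $h\equiv 0$, and $X_P(y)=1$, so the claim is trivial --- a one-line check the paper also makes.
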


\begin{proof}
By Equation (\ref{duality-relation}), it suffices to prove (i). Let $f$ be the rank function of $P$.

If $f([n])=0$, then $f(I)=0$ for any $I\subseteq [n]$ by the monotonicity of $f$. We have that $P=\{(0,0,\ldots,0)\}$. This implies $X_{P}(y)=1$. The conclusion holds.

Suppose $f([n])=1$. We may assume that $f(\{i\})\geq 1$ for any $i\in [n]$. (If there is an $i'$ such that $f(\{i'\})=0$, then $\alpha_{i'}=\beta_{i'}=0$. By Lemma \ref{deletion-contraction-IX}, $X_{P}=X_{P\setminus i'}$.)  Then $f(I)=1$ for any $\emptyset \neq I\subseteq [n]$ by the monotonicity of $f$. We have that $P=\{\textbf{e}_{1},\textbf{e}_{2},\ldots,\textbf{e}_{n}\}$. This implies $X_{P}(y)=1+y+y^{2}+\ldots+y^{n-1}$. In this case, the conclusion also holds.

We now assume that $f([n])\geq 2$. It is enough to prove the following claim.

\textbf{Claim.} For any $1\leq i<r_{2}(P)-1$, if $[y^{i}]X_{P}(y)-[y^{i-1}]X_{P}(y)\leq0$, then $[y^{i+1}]X_{P}(y)-[y^{i}]X_{P}(y)\leq 0$.

\emph{Proof of Claim 1.} From Theorem \ref{connected-exterior2}, we have that
$$[y^{i-1}]X_{P}(y)=\binom{f([n])+i-2}{i-1}-\sum_{j=0}^{i-1}\binom{f([n])+i-2-j}{i-1-j}|\mathcal{H}_{j}(P)|$$ and
$$[y^{i+1}]X_{P}(y)=\binom{f([n])+i}{i+1}-\sum_{j=0}^{i+1}\binom{f([n])+i-j}{i+1-j}|\mathcal{H}_{j}(P)|.$$

Note that $\binom{f([n])-2}{0}=\binom{f([n])-1}{0}=1$ if $f([n])\geq 2$. Then
\begin{eqnarray*}
&&[y^{i}]X_{P}(y)-[y^{i-1}]X_{P}(y)\\
&=&\left(\binom{f([n])+i-1}{i}-\sum_{j=0}^{i}\binom{f([n])+i-1-j}{i-j}|\mathcal{H}_{j}(P)|\right)\\
&-&\left(\binom{f([n])+i-2}{i-1}-\sum_{j=0}^{i-1}\binom{f([n])+i-2-j}{i-1-j}|\mathcal{H}_{j}(P)|\right)\\
&=&\binom{f([n])+i-2}{i}-\sum_{j=0}^{i}\binom{f([n])+i-2-j}{i-j}|\mathcal{H}_{j}(P)|.
\end{eqnarray*}
Note that for any integers $a,b,j$, we have that $\frac{a-j}{b-j}\geq\frac{a}{b}$ if $a\geq b>j$, moreover, $\binom{a+1}{b+1}=\frac{a+1}{b+1}\binom{a}{b}$. Then
\begin{eqnarray*}
&&[y^{i+1}]X_{P}(y)-[y^{i}]X_{P}(y)\\
&=&\binom{f([n])+i-1}{i+1}-\sum_{j=0}^{i+1}\binom{f([n])+i-1-j}{i+1-j}|\mathcal{H}_{j}(P)|\\
&\leq&\binom{f([n])+i-1}{i+1}-\sum_{j=0}^{i}\binom{f([n])+i-1-j}{i+1-j}|\mathcal{H}_{j}(P)|\\
&=&\frac{f([n])+i-1}{i+1}\binom{f([n])+i-2}{i}\\
&-&\sum_{j=0}^{i}\frac{f([n])+i-1-j}{i+1-j}\binom{f([n])+i-2-j}{i-j}|\mathcal{H}_{j}(P)|\\
&\leq& \frac{f([n])+i-1}{i+1}\left(\binom{f([n])+i-2}{i}-\sum_{j=0}^{i}\binom{f([n])+i-2-j}{i-j}|\mathcal{H}_{j}(P)|\right)\\
&=& \frac{f([n])+i-1}{i+1}([y^{i}]X_{P}(y)-[y^{i-1}]X_{P}(y))\\
&\leq& 0.
\end{eqnarray*}
\end{proof}

\begin{remark}
Recall that Theorem \ref{connected-exterior} is a special case of Theorem \ref{connected-exterior2}. In 2018, Adiprasito, Huh and Katz \cite{Adiprasito} indirectly proved that the sequences of all coefficients of $T_{M}(x,1)$ and $T_{M}(1,y)$ are both unimodal by applying Hodge theory. Hence, Theorem \ref{unimodality}  provides a simpler and direct proof for the unimodality of partial coefficients of $T_{M}(x,1)$ and $T_{M}(1,y)$.
\end{remark}

\section*{Acknowledgements}
\noindent

This work was supported by National Natural Science Foundation of China (Nos.~12171402, 12371356 and 12401462), the Natural Science Foundation of Shanxi Province (No.~202403021222034), Shanxi Key Laboratory of Digital Design and Manufacturing and Scientific Research Start-Up Foundation of Jimei University (No.~ZQ2024116).

\section*{References}
\bibliographystyle{model1b-num-names}
\bibliography{<your-bib-database>}

\end{document}